\DeclareMathOperator{\supp}{supp}
\DeclareMathOperator{\diag}{diag}
\begin{document}

\title*{A compressive spectral collocation method\\ for the diffusion equation under the\\ restricted isometry property}
\titlerunning{A compressive spectral collocation method for the diffusion equation}

% Use \titlerunning{Short Title} for an abbreviated version of
% your contribution title if the original one is too long
\author{Simone Brugiapaglia}
% Use \authorrunning{Short Title} for an abbreviated version of
% your contribution title if the original one is too long
\institute{Simone Brugiapaglia \at Simon Fraser University, 8888 University Dr, Burnaby, BC, V5A 1S6, Canada.\\\email{simone\_brugiapaglia@sfu.ca}}
%
% Use the package "url.sty" to avoid
% problems with special characters
% used in your e-mail or web address
%
\maketitle

% This is the online abstract

%\abstract*{}

\abstract*{We propose a compressive spectral collocation method for the numerical approximation of Partial Differential Equations (PDEs). The approach is based on a spectral Sturm-Liouville approximation of the solution and on the collocation of the PDE in strong form at randomized points, by taking advantage of the compressive sensing principle. The proposed approach makes use of a number of collocation points substantially less than the number of basis functions when the solution to recover is sparse or compressible. Focusing on the case of the diffusion equation, we prove that, under suitable assumptions on the diffusion coefficient, the matrix associated with the compressive spectral collocation approach satisfies the restricted isometry property of compressive sensing  with high probability. Moreover, we demonstrate the ability of the proposed method to reduce the computational cost associated with the corresponding full spectral collocation approach while preserving good accuracy through numerical illustrations.}

\abstract{We propose a compressive spectral collocation method for the numerical approximation of Partial Differential Equations (PDEs). The approach is based on a spectral Sturm-Liouville approximation of the solution and on the collocation of the PDE in strong form at randomized points, by taking advantage of the compressive sensing principle. The proposed approach makes use of a number of collocation points substantially less than the number of basis functions when the solution to recover is sparse or compressible. Focusing on the case of the diffusion equation, we prove that, under suitable assumptions on the diffusion coefficient, the matrix associated with the compressive spectral collocation approach satisfies the restricted isometry property of compressive sensing  with high probability. Moreover, we demonstrate the ability of the proposed method to reduce the computational cost associated with the corresponding full spectral collocation approach while preserving good accuracy through numerical illustrations.}

%\tableofcontents

\section{Introduction}

Compressive Sensing (CS) is a mathematical principle introduced in 2006 that allows for the efficient measurement and reconstruction of sparse and compressible signals. Its success is now established in the signal processing community and its wide range of applications include medical imaging, computational biology, geophysical data analysis, compressive radar, remote sensing, and machine learning. More recently, CS has also started attracting more and more attention in scientific computing and numerical analysis, in particular, in the fields of numerical methods for Partial Differential Equations (PDEs), high-dimensional function approximation, and  uncertainty quantification. %The full potential of CS is yet to be fully unveiled and delineating the boundaries of its applicability in numerical analysis probably deserves more years of work from the scientific community. 

In this paper, we present a novel technique for the numerical solution of PDEs based on CS. The proposed approach, called \textit{compressive spectral collocation} takes advantage the CS principle in the context of spectral collocation methods. Its constitutive elements are: (i) Sturm-Liouville spectral approximation, (ii) randomized collocation, and (iii) greedy sparse recovery. In order to make the presentation easier and the theoretical analysis of the method accessible, we focus on the case of a stationary diffusion equation over a tensor product domain with homogeneous boundary conditions. %However, it is not difficult to extend the approach to other types of PDEs such as, e.g., advection-diffusion-reaction equations. 

\subsection{Main contributions}

We propose a novel numerical method for PDEs, called compressive spectral collocation, focusing on the case of a stationary diffusion equation over a tensor product domain with homogeneous boundary conditions. The approach leverages the CS principle by randomizing the choice of the collocation points and by promoting sparse solutions with respect to a Sturm-Liouville basis, which are recovered via the greedy algorithm orthogonal matching pursuit. 

Our main contributions can be summarized as follows:

\begin{enumerate}
\item In Algorithm~\ref{alg:CSC}, we present a rigorous formulation of the compressive spectral collocation approach for the diffusion equation.

\item In Theorem~\ref{thm:RIP_CSC}, we prove that the matrix associated with the compressive spectral collocation approach satisfies the restricted isometry property of CS under suitable assumptions on the diffusion coefficient. %This is a first step toward the proof of rigorous recovery guarantees for the numerical method.

\item In Section~\ref{sec:numerics}, we demonstrate numerically that the compressive spectral collocation approach is able to recover sparse solutions with higher accuracy and lower computational cost than the corresponding ``full'' spectral collocation method. Moreover, in the case of compressible solutions, we show that the compressive approach is computationally less expensive than the full one while maintaining a good level of accuracy. 
\end{enumerate}

%Before outlining the structure of the paper, we review the main results of CS-based 

Before outlining the structure of the paper, we review the literature about CS-based methods in numerical analysis, placing particular emphasis on numerical methods for PDEs.

\subsection{Literature review}

CS was proposed in 2006 by the pioneering works of Donoho \cite{donoho2006compressed}, Cand\`{e}s, Romberg, and Tao \cite{candes2006robust} and has triggered an impressive amount of work since then. %These works have triggered an impressive amount of work. Although born in the context of signal processing, CS is a very general and flexible principle that has been applied to different context. Here we focus on applications to scientific computing. In particular, to numerical methods for PDEs. 

The very first attempt to apply CS to the numerical approximation of a PDEs can be found in \cite{JOKAR2010452}. The authors propose a Galerkin discretization of the Poisson problem, where the trial and test spaces are composed by piecewise linear finite elements. The technique is deterministic and relies on the successive refinement of the solution on different hierarchical levels and on a suitable error estimator. Recovery is based on $\ell^1$-minimization.

The CS principle has then been applied to Petrov-Galerkin discretizations of advection-diffusion-reaction equations via the \textsf{COmpRessed SolvING} method (in short, \textsf{CORSING}), proposed in  \cite{brugiapaglia2015compressed}. The method employs  Fourier-type trial functions and wavelet-like test functions (or vice versa) and the dimensionality of the discretization is reduced by randomly subsampling the test space. The theoretical analysis of the method in the infinite-dimensional setting has been carried out in \cite{brugiapaglia2018theoretical}. The \textsf{CORSING} method has also been applied to the two-dimensional Stokes' equation in \cite{brugiapaglia2016compressed}. 

Numerical methods for PDEs based on $\ell^1$ minimization can be considered the ancestors of CS-based methods for PDEs. Lavery conducted some pioneering studies on finite differences for the inviscid Burgers' equation \cite{lavery1988nonoscillatory} and on finite volumes dicretizations for steady scalar conservation laws \cite{lavery1989solution}. More recently, similar techniques have been analyzed for transport and Hamilton-Jacobi equations \cite{guermond2004finite,guermond2009optimal}.
% NEW
Moreover, some works considered sparsity-promoting spectral schemes for time-dependent multiscale problems based on soft thresholding \cite{mackey2014compressive,schaeffer2013sparse} or on the sparse Fourier transform \cite{daubechies2007sparse}.
% END NEW

On a different but related note, there has been a lot of research activity around CS-based methods for the uncertainty quantification of PDEs with random inputs \cite{bouchot2017multi,doostan2011non,mathelin2012compressed,peng2014weighted,rauhut2017compressive,yang2013reweighted}. In these works, the CS principle is combined with Polynomial Chaos in order to approximate a quantity of interest of the solution map of the PDE. Being very smooth for a wide family of operator equations, this map can be approximated by a sparse combination of orthogonal polynomials and the CS principle employed to lessen the curse of dimensionality. %\red{[CITARE CLAYTON, NICK, BEN, ME?]}

Finally, it is worth mentioning recent works where CS is employed to learn the governing equations of a dynamical systems given time-varying measurements \cite{tran2017exact}
% NEW
and to solve inverse problems in PDEs \cite{alberti2017infinite}.
% END NEW

\subsection{Outline of the paper}

The paper is organized as follows. 

In Section~\ref{sec:CS}, we recall some of the main elements of CS, of particular interest in our context. We place more emphasis on greedy recovery via orthogonal matching pursuit and on recovery guarantees based on the restricted isometry property. 

Equipped with the CS fundamentals, we present the compressive spectral collocation method in Section~\ref{sec:method}, focusing on the case of a homogeneous stationary diffusion equation.  

Section~\ref{sec:theory} deals with the theoretical analysis of the method. We prove that the matrix associated with the compressive spectral collocation approach satisfies the restricted isometry property with high probability under suitable conditions on the diffusion coefficient. Moreover, we discuss the implications of the restricted isometry property for the recovery error analysis of the method.

In Section~\ref{sec:numerics}, we illustrate some numerical results for the two-dimensional diffusion equation. We assess the performance of the compressive spectral collocation approach when recovering sparse and compressible solutions. Moreover, we compare it with the corresponding ``full'' spectral collocation approach, demonstrating the computational advantages associated with the proposed strategy. 

Conclusions and future directions are finally discussed in Section~\ref{sec:conclusions}.

\section{Elements of compressive sensing}
\label{sec:CS}

We introduce some elements of CS that will be useful to define the compressive spectral collocation approach. Our presentation is based on a very special selection of topics. For a comprehensive introduction to CS, we refer the reader to \cite{foucart2013mathematical}.

CS deals with the problem of measuring a sparse or compressible signal by using the minimum amount of linear, nonadaptive observations, and of reconstructing it via convex optimization techniques (such as $\ell^1$ minimization and its variants), greedy algorithms, or thresholding techniques. %From the intuitive viewpoint, the fundamental principle underlying CS is the uncertainty principle: If an object is sparse in the space (or time) domain, its Fourier spectrum is fully populated, and the number measurement in the Fourier space can be therefore randomized. This allows to break the so-called Nyquist-Shannon barrier.

Here, we focus on CS with greedy recovery via orthogonal matching pursuit. After introducing this setting in Section~\ref{sec:sensing_recovery}, we recall some theoretical results based on the restricted isometry property in Section~\ref{sec:recovery}.

\subsection{Compressive sensing and greedy recovery}
\label{sec:sensing_recovery}

Let us consider a vector $x \in \mathbb{R}^N$ (often called ``signal''). We restrict the presentation to the real case, even though the theory can be generalized to the complex case. We collect $m$ linear nonadaptive measurements of $x$ into a vector $b \in \mathbb{R}^m$, i.e.\
\begin{equation}
\label{eq:CSsystem}
A x = b.
\end{equation}
The matrix $A \in \mathbb{R}^{m \times N}$ is called the sensing matrix and $m \ll N$. The problem of finding $x$ given $b$ is clearly ill-posed since the linear system  \eqref{eq:CSsystem} is highly underdetermined. In order to regularize this inverse problem, the \emph{a priori} information assumed on $x$ is \emph{sparsity} or \emph{compressibility}. 

A vector is said to be $s$-sparse if it has at most $s$ nonzero entries. More in general, $x$ is said to be compressible if, for some $p \geq 1$, its best $s$-term approximation error (with respect to some $\ell^p$ norm) $\sigma_s(x)_p$ decays quickly in $s$, where $\sigma_s(x)_p$ is defined as
$$
\sigma_s(x)_p:= \inf\{ \|x-v\|_p : v \in \mathbb{R}^N, \|v\|_0 \leq s\},
$$
with
$$
\|v\|_0 := |\supp(v)|, \quad \supp(v) := \{j \in [N]: v_j \neq 0\}, \quad \forall v \in \mathbb{R}^N,
$$
and where we have employed the notation
$$
[n] := \{1,\ldots,n\}, \quad \forall n \in \mathbb{N}.
$$
Notice that if the signal $x$ is $s$-sparse, then $\sigma_k(x)_p = 0$, for every $k \geq s$ and $p \geq 1$. 

A plethora of recovery strategies is available in order to find sparse or compressible solutions to the linear system \eqref{eq:CSsystem}. In this paper, we focus on the greedy algorithm \emph{Orthogonal Matching Pursuit} (OMP)  (see \cite{temlyakov2003nonlinear} and references therein), outlined in Algorithm~\ref{alg:OMP}.

\begin{algorithm}
\normalsize
\textbf{Inputs:}
\begin{itemize}
\item $A\in \mathbb{R}^{m \times N}$: sensing matrix, with $\ell^2$-normalized columns;
\item $b \in \mathbb{R}^m$: vector of measurements;
\item $K \in \mathbb{N}$: number of iterations.
\end{itemize}
\textbf{Orthogonal Matching Pursuit:}
\begin{enumerate}
\item Let $\hat{x}_0 = 0$ and $S_0 = \supp(\hat{x}_0) = \emptyset$;
\item For $k = 1,\ldots,K$, repeat the following steps:
\begin{enumerate}
\item Find  $\displaystyle j_k  = \arg\max_{j \in [N]} |(A^T(A\hat{x}_{k-1} - b))_j|$;
\item Define $S_k   = S_{k-1} \cup \{j_k\}$;
\item Compute $\displaystyle\hat{x}_{k}  = \arg\min_{v \in \mathbb{R}^{N}} \|A v - b\|_2 \text{ s.t. } \supp(v) \subseteq S_k$.
\end{enumerate}
\end{enumerate}
\textbf{Output:}
\begin{itemize}
\item $\hat{x}_K \in \mathbb{R}^N$: $K$-sparse approximate solution to \eqref{eq:CSsystem}.
\end{itemize}
\caption{\label{alg:OMP}Orthogonal Matching Pursuit (OMP)}
\end{algorithm}

OMP iteratively constructs a sequence of $k$-sparse vectors $\hat{x}_k$ that approximately solve \eqref{eq:CSsystem}, with  $k = 1, \ldots, K$, by adding at most one new entry to the support at each iteration. During the $k$-th iteration, OMP seeks the column of $A$ mostly correlated with the residual associated with the previous approximation $\hat{x}_{k-1}$. Then, the support is enlarged by adding the corresponding index, and the $k$-th approximation $\hat{x}_{k}$ is computed by solving an $m \times k$ least-squares problem. Observe that the least-square problem solved to compute $\hat{x}_k$ is overdetermined if $K \leq m$, which is usually the case in practice.

\subsection{Recovery guarantees based on the restricted isometry property}
\label{sec:recovery}

In order to quantify the approximation error associated with the OMP solution, we present some theoretical results based on the restricted isometry property, which has by now become a standard tool in CS.

\begin{definition} 
A matrix $A \in \mathbb{R}^{m \times N}$ is said to satisfy the \textit{restricted isometry property of order $s$ and constant $0 < \delta  <1$} if
\begin{equation}
\label{eq:RIP}
(1-\delta)\|v\|_2^2 \leq \|A v\|_2^2 \leq (1+\delta)\|v\|_2^2, \quad \forall v \in \mathbb{R}^N, \; \|v\|_0 \leq s.
\end{equation}
The smallest $0 < \delta < 1$ such that \eqref{eq:RIP} holds is referred to as the \textit{$s$-th restricted isometry constant} of $A$ and it is denoted by $\delta_s(A)$.

\end{definition}

Intuitively, the restricted isometry property requires the map $x \mapsto Ax$ to approximately preserve distances when its action is restricted to the set of $s$-sparse vectors, up to a distortion factor $\delta$. Computing $\delta_s(A)$ given $A$ is not computationally feasible in general since it implies a search over all the $N \choose s$ subsets of $[N]$ of cardinality $s$. However, what makes this tool extremely useful in practice is the fact that it is possible to show that certain classes of random matrices satisfy the restricted isometry property with high probability. 

The following theorem gives sufficient conditions for a matrix $A \in \mathbb{R}^{m \times N}$ built by independently selecting $m$ random rows according to a suitable probability density from a ``tall'' matrix $B \in \mathbb{R}^{M \times N}$ in order to satisfy the restricted isometry property (up to a suitable diagonal preconditioning). These conditions depend on the spectrum of the Gram matrix $B^TB$ and on the so-called \textit{local coherence} of $B$, i.e., the vector whose entries are
$$
\max_{j \in [N]}(B_{qj})^2, \quad \forall q \in [M].
$$ 
The proof of this result can be found in \cite[Theorem 1.21]{brugiapaglia2016compressed}. Let us note that this is an extension of the restricted isometry property analysis based on the local coherence for bounded orthonormal systems proposed in \cite{krahmer2014stable}, where the orthonormality condition is relaxed. 
\begin{theorem}
\label{thm:RIP_nonisometry}
Consider $B \in \mathbb{R}^{M \times N}$,  with $M \geq N$, and suppose that there exist two constants $0 < r \leq R < + \infty$ such that the minimum and maximum eigenvalues of $B^TB$ satisfy 
$$
0 < r \leq \lambda_{\min}(B^TB) \leq \lambda_{\max}(B^TB) \leq R. 
$$
Moreover, assume that there exists a vector $\nu \in \mathbb{R}^M$ such that the local coherence of $B$ is bounded from above as follows:
$$
\max_{j \in [N]}(B_{qj})^2 \leq \nu_q, \quad \forall q \in [M].
$$
Then, for every $1-\frac{r}{R} < \delta < 1$, there exists a universal constant $c>0$ such that, provided
$$
m \geq \widetilde{c} \, s \ln^3(s)\ln(N),
$$
and $s \geq  \widetilde{c} \, \ln(N )$, where
$$
\widetilde{c} = c \,\max\left(\frac{\|\nu\|_1}{R},1\right)\left(\delta-\left(1-\frac{r}{R}\right)\right)^{-2},
$$
the following holds.

Let us draw $\tau_1,\ldots,\tau_m$ i.i.d.\ from $[M]$ distributed according to the probability density 
$$
p = \frac{\nu}{\|\nu\|_1} \in \mathbb{R}^M,
$$
and define $A \in \mathbb{R}^{m \times N}$ and $D \in \mathbb{R}^{m \times m}$ as
\begin{equation}
\label{eq:def_A_from_B}
A_{i,j} =B_{\tau_i,j},\quad\forall i\in [m], \; \forall j \in [N], \quad  D= \diag\left(\left(\frac{1}{\sqrt{mRp_i}}\right)_{i \in[m]}\right), \quad 
\end{equation}
where $\diag (v)$ denotes the matrix having the entries of $v$ on the main diagonal and zeros elsewhere. Then, the $s$-th restriced isometry constant of $DA$ satisfies 
$$
\delta_s(DA) \leq \delta,
$$
with probability at least $1-N^{-\ln^3(s)}$.

\end{theorem}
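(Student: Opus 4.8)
The plan is to follow the standard template for establishing the restricted isometry property of a randomly subsampled matrix, adapting it to accommodate the fact that $B^T B$ is only spectrally equivalent to the identity rather than equal to it. First I would record the two structural consequences of the hypotheses that drive the whole argument. Writing the rows of $DA$ as $u_i = (R\, p_{\tau_i})^{-1/2} B_{\tau_i,:}^T \in \mathbb{R}^N$, a direct computation gives
\begin{equation*}
(DA)^T(DA) = \frac{1}{m}\sum_{i=1}^m u_i u_i^T, \qquad \mathbb{E}\big[(DA)^T(DA)\big] = \frac{1}{R}\, B^T B,
\end{equation*}
since $\tau$ has density $p_q = \nu_q/\|\nu\|_1$ and hence $\mathbb{E}[(R p_\tau)^{-1} B_{\tau,:}^T B_{\tau,:}] = R^{-1}\sum_q B_{q,:}^T B_{q,:}$. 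Moreover, the local coherence bound $\max_j (B_{qj})^2 \leq \nu_q = \|\nu\|_1 p_q$ yields the uniform estimate $\|u_i\|_\infty \leq \sqrt{\|\nu\|_1/R}$, which plays the role of the bounded orthonormal system constant in the classical theory.

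The reduction step would split the restricted isometry constant into a deterministic bias and a random fluctuation. For any $S \subseteq [N]$ with $|S|\leq s$, denoting by $M_S$ the $S\times S$ principal submatrix of a matrix $M$, I would bound
\begin{equation*}
\big\|\big((DA)^T(DA)\big)_S - I_S\big\|_{2\to 2} \leq \underbrace{\big\|\big((DA)^T(DA) - \tfrac{1}{R}B^TB\big)_S\big\|_{2\to 2}}_{\text{random}} + \underbrace{\big\|\tfrac{1}{R}(B^TB)_S - I_S\big\|_{2\to 2}}_{\text{deterministic}}.
\end{equation*}
By the min-max principle the eigenvalues of $\tfrac{1}{R}(B^TB)_S$ lie in $[r/R,1]$, so the deterministic term is at most $1-r/R$ uniformly in $S$; this is exactly why $\delta$ must exceed $1-r/R$, leaving a budget of $\eta := \delta - (1-r/R)>0$ for the random term.

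The heart of the proof is to show that, with the stated sample complexity, the random term is at most $\eta$ with high probability, uniformly over all $|S|\leq s$; equivalently,
\begin{equation*}
\sup_{v \in \Sigma_s} \Big| \frac{1}{m}\sum_{i=1}^m \langle u_i, v\rangle^2 - \mathbb{E}\,\langle u_1,v\rangle^2 \Big| \leq \eta, \qquad \Sigma_s := \{v : \|v\|_0 \leq s,\ \|v\|_2 \leq 1\}.
\end{equation*}
I would first bound the expectation of this supremum by Rademacher symmetrization followed by a Dudley / generic-chaining estimate of the resulting empirical process, where the $\ell^\infty$ bound $\|u_i\|_\infty \leq \sqrt{\|\nu\|_1/R}$ controls the covering numbers of the set of $s$-sparse unit vectors and produces the characteristic scaling $\sqrt{(\|\nu\|_1/R)\, s\,\ln^3(s)\ln(N)/m}$. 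This chaining estimate is the technical crux and the step I expect to be the main obstacle, since it requires the carefully iterated entropy bounds of Rudelson--Vershynin / Krahmer--Ward type rather than a single covering argument.

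Finally, I would upgrade the in-expectation bound to the high-probability statement. Applying a Talagrand-type deviation inequality for suprema of empirical processes, as in the bounded orthonormal system analysis, shows that the supremum exceeds a fixed fraction of $\eta$ only with probability at most $N^{-\ln^3(s)}$, provided $m \geq \widetilde c\, s\ln^3(s)\ln(N)$ and $s \geq \widetilde c\,\ln(N)$ with $\widetilde c \propto \max(\|\nu\|_1/R,1)\,\eta^{-2}$. Combining the controlled random term with the deterministic bias then gives $\delta_s(DA)\leq \eta + (1-r/R) = \delta$ on this event, completing the argument. Since the structure beyond the bias reduction is identical to the orthonormal case, the extension essentially amounts to tracking the factor $1/R$ and the gap $\eta$ through the classical estimates.
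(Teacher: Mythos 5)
Your proposal is correct and takes essentially the same route as the proof the paper relies on: the paper does not prove Theorem~\ref{thm:RIP_nonisometry} internally but cites \cite[Theorem~1.21]{brugiapaglia2016compressed}, which is precisely the extension of the Krahmer--Ward local-coherence RIP analysis for bounded orthonormal systems \cite{krahmer2014stable} obtained, as in your sketch, by splitting off the deterministic bias $\bigl\|\tfrac{1}{R}(B^TB)_S - I_S\bigr\|_{2\to 2} \leq 1 - r/R$ (interlacing) and controlling the centered empirical process over $s$-sparse vectors via Rudelson--Vershynin-type chaining plus a Talagrand deviation inequality. Your bookkeeping---$\mathbb{E}\bigl[(DA)^T(DA)\bigr] = \tfrac{1}{R}B^TB$, the preconditioned coherence bound $\|u_i\|_\infty \leq \sqrt{\|\nu\|_1/R}$, and the budget $\eta = \delta - (1-r/R)$ feeding the $\max(\|\nu\|_1/R,1)\,\eta^{-2}$ factor in $\widetilde{c}$---matches the constants in the statement.
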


The restricted isometry property is a sufficient condition to show that the vector $\hat{x}_K$ computed by $K$ iterations of OMP is a good approximation to $x$. In particular, a suitable upper bound on the $26s$-th restricted isometry property constant is sufficient for OMP to reach the accuracy of the best $s$-term approximation error up to a universal multiplicative constant using $K =24s$ iterations. The following theorem is a direct consequence of \cite[Theorem~6.25]{foucart2013mathematical}. The recovery error analysis of OMP based on the restricted isometry property was originally proposed in \cite{zhang2011sparse}. 
\begin{theorem}
\label{thm:OMP_recovery}
Let $A\in \mathbb{R}^{m \times N}$ with $\ell^2$-normalized columns such that
\begin{equation}
\delta_{26s}(A) < \frac{1}{6}.
\end{equation} 
Then, there exists a universal constant $C >0$ such that for every $x \in \mathbb{R}^N$ and $b \in \mathbb{R}^m$ such that  \eqref{eq:CSsystem} holds, the vector $ \hat{x}_{K}$ computed by $K = 24s$ iterations of OMP (Algorithm~\ref{alg:OMP}) satisfies  
\begin{equation}
\label{eq:OMP_recovery_l2}
\|x-\hat{x}_K\|_2 \leq C \,\frac{\sigma_s(x)_1}{\sqrt{s}}.
\end{equation}
\end{theorem}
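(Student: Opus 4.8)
The plan is to recognize the statement as the noiseless specialization of the general robust and stable recovery guarantee for OMP under the restricted isometry property, namely \cite[Theorem~6.25]{foucart2013mathematical} (whose OMP error analysis originates in \cite{zhang2011sparse}). That general result applies to possibly inexact measurements $b = Ax + e$ and asserts that, whenever $A$ has $\ell^2$-normalized columns and $\delta_{26s}(A) < 1/6$, the vector $\hat{x}_K$ produced by $K = 24s$ iterations of OMP obeys a mixed-norm estimate of the form
\begin{equation*}
\|x - \hat{x}_K\|_2 \leq C_1 \, \frac{\sigma_s(x)_1}{\sqrt{s}} + C_2 \, \|e\|_2,
\end{equation*}
for universal constants $C_1, C_2 > 0$. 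All the hypotheses of Theorem~\ref{thm:OMP_recovery} are tailored to invoke this result directly.

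First, I would verify that the structural assumptions of the cited theorem are met: the columns of $A$ are $\ell^2$-normalized by assumption; the restricted isometry constant satisfies $\delta_{26s}(A) < 1/6$ by hypothesis; and the number of OMP iterations (Algorithm~\ref{alg:OMP}) is exactly $K = 24s$, as prescribed. Second, since here the measurements are \emph{exact}, i.e.\ $Ax = b$ as in \eqref{eq:CSsystem}, I would set the noise vector $e = 0$, so that $\|e\|_2 = 0$ and the second term in the mixed estimate vanishes. Setting $C := C_1$ then yields \eqref{eq:OMP_recovery_l2}, which is the desired conclusion. (Note that $\hat{x}_K$ is $24s$-sparse rather than $s$-sparse, which is exactly the regime covered by the cited theorem.)

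Since the result is a direct corollary, there is no genuine mathematical obstacle in this reduction; the only care required is bookkeeping, namely checking that the iteration count, the restricted isometry order, and the threshold on the restricted isometry constant coincide with those in the cited theorem. Were one to reprove the underlying statement from scratch, the crux would instead lie in Zhang's inductive argument showing that $24s$ greedy iterations capture essentially all of the energy carried by the $s$ largest-in-magnitude entries of $x$, with each step controlled through the $26s$-restricted isometry constant; the passage from an $s$-sparse target to a general compressible $x$, which produces the best $s$-term error $\sigma_s(x)_1/\sqrt{s}$ on the right-hand side of \eqref{eq:OMP_recovery_l2}, then follows from the standard sparse-to-compressible reduction underlying instance-optimality bounds in CS.
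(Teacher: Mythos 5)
Your proposal is correct and follows exactly the route the paper takes: the paper states this theorem as a direct consequence of \cite[Theorem~6.25]{foucart2013mathematical} (with the OMP analysis originating in \cite{zhang2011sparse}), which is precisely your specialization of the noisy recovery bound to $e=0$. Your hypothesis-checking (normalized columns, $\delta_{26s}(A)<1/6$, $K=24s$) and the observation about the vanishing noise term match the paper's own remark that the noisy version carries an additional term proportional to $\|e\|_2$.
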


This type of recovery error estimate is called ``uniform'' since it holds for every signal $x\in \mathbb{R}^N$. It is worth noticing that when $x$ is $s$-sparse, it is recovered \textit{exactly} since $\sigma_s(x)_1 = 0$. Moreover, let us observe that a more general version of this theorem holds in the case of noisy measurements, i.e., when $y = Ax + e$, where $e \in \mathbb{R}^m$ is a noise vector corrupting the measurements. In that case, an additive term directly proportional to $\|e\|_2$ appears on the left-hand side of \eqref{eq:OMP_recovery_l2} (see \cite[Theorem~6.25]{foucart2013mathematical}).

\section{Compressive spectral collocation}
\label{sec:method}

We are now in a position to introduce the compressive spectral collocation method. Let us consider the following diffusion equation in strong form: 
\begin{equation}
\label{eq:diffusion_strong}
\begin{cases}
-\nabla \cdot (\eta \nabla u)  = F, & \text{in } \Omega,\\
u = 0, & \text{on } \partial \Omega,
\end{cases}
\end{equation}
where $\Omega = (0,1)^d$ is the physical domain, $u \in C^2(\overline{\Omega})$ is the unknown solution, the function $\eta : \overline{\Omega} \to \mathbb{R}$, with $\eta \in C^1(\overline{\Omega})$ and $\eta(x) \geq \eta_{\min} >0$ for every $x \in \overline{\Omega}$, is the diffusion coefficient, and  $F \in C^0(\overline{\Omega})$ is the forcing term. We also consider the dimension $d$ to be of moderate size.

We will define the compressive spectral collocation approach in two steps. First, we describe the Sturm-Liouville basis, the collocation grid employed, and the corresponding ``full'' spectral collocation method in Section~\ref{sec:full}. Then, in Section~\ref{sec:compressive}, we define the compressive approach, outlined in Algorithm~\ref{alg:CSC}.

\subsection{The spectral basis and the collocation grid}
\label{sec:full}
We discretize equation  \eqref{eq:diffusion_strong} by using a  spectral collocation method based on a Sturm-Liouville basis (for a comprehensive introduction to spectral methods, we refer the reader to \cite{canuto2012spectral,gottlieb1977numerical}). In particular, let us consider the functions
\begin{equation}
\label{eq:def_xi_j}
\xi_j(z) := \frac{2^{d/2}}{\pi^2 \|j\|_2^2}\cdot
\prod_{k = 1}^d \sin(\pi j_k z_k), \quad \forall x \in \overline{\Omega}, \quad \forall j \in \mathbb{N}^d.
\end{equation}
The system $\{\xi_j\}_{j \in \mathbb{N}^d}$ is formed by eigenvectors of the Laplace operator with homogeneous Dirichlet boundary conditions, normalized such that 
$$
\|\Delta \xi_j\|_{L^2(\Omega)} = 1, \quad \forall j \in \mathbb{N}^d.
$$
In fact, the system $\{\Delta\xi_j\}_{j \in \mathbb{N}^d}$ is an orthonormal basis for $L^2(\Omega)$ with respect to the standard inner product $\int_\Omega uv$. 
% NEW
Expanding a function with respect to this basis (up to normalization) corresponds to the so-called ``modified Fourier series expansion''. The coefficients' decay rate of the modified Fourier series expansion of a function is related to its Sobolev regularity and to suitable boundary conditions involving its derivatives. Here, we will assume the solution $u$ to be regular enough to guarantee the compressibility of its coefficients and, consequently, to enable the application of the CS principle. For more details on the approximation properties of univariate and multivariate modified Fourier series expansions and on their usage in spectral methods for PDEs, we refer the reader to \cite{adcock2009univariate,adcock2010multivariate,iserles2008high,iserles2009high}.
% END NEW

Let us now truncate the multi-index set $\mathbb{N}^d$ by using the tensor product multi-index space of order $n \in \mathbb{N}$, i.e. 
$$
[n]^d \subseteq \mathbb{N}^d,
$$ 
of cardinality $N := n^d$. Given a truncation level $n$, we rescale of the basis functions and define
\begin{equation}
\label{eq:def_psi_j}
\psi_j(z) := 
\frac{\xi_j(z)}{(n+1)^{d/2}} , \quad \forall x \in \overline{\Omega}, \quad \forall j \in [n]^d.
\end{equation}
(The normalizations chosen in \eqref{eq:def_xi_j} and in \eqref{eq:def_psi_j} will turn out to be crucial in order to guarantee the restricted isometry property.)

As a collocation grid, we consider a tensorial grid of uniform step $h = 1/(N+1)$ over $\Omega$, defined as
$$
t_q := \frac{q}{n+1}, \quad \forall q \in [n]^d.
$$ 
Notice that we do not need collocation points on  $\partial \Omega$ since the functions $\{\psi_j\}_{j \in [n]^d}$ already satisfy the homogeneous boundary conditions. 

The resulting ``full'' spectral collocation discretization of \eqref{eq:diffusion_strong} is given by
\begin{equation}
\label{eq:full_SC_system}
B x^{\textnormal{full}} = c,
\end{equation}
where
\begin{equation}
\label{eq:def_B_c}
B_{qj} = [\nabla \cdot (\eta \nabla \psi_j)](t_q), 
\quad c_q = F(t_q), \quad \forall q, j  \in [n]^d,
\end{equation}
and where we are implicitly assuming some ordering for multi-indices in $[n]^d$ (e.g., the lexicographic ordering). Given a solution $x^{\text{full}}$ to the system \eqref{eq:full_SC_system}, the full spectral approximation to $u$ is defined as
\begin{equation}
u^{\textnormal{full}} = \sum_{j \in [n]^d} x^{\textnormal{full}}_j \psi_j.
\end{equation}

\subsection{The compressive approach}
\label{sec:compressive}

Before describing the compressive approach, let us explain the rationale behind the normalizations adopted in \eqref{eq:def_xi_j} and \eqref{eq:def_psi_j} by considering for a moment the simple case of the Poisson equation. The normalization chosen for the system $\{\psi_j\}_{j \in [n]^d}$ ensures that
$$
\text{if }\quad \eta(x) = 1, \; \forall x \in \overline{\Omega}, \quad \text{ then }\quad B = \underbrace{S_n \otimes \cdots \otimes S_n}_{d \text{ times}},
$$
where $\otimes$ denotes the matrix Kronecker product and where $S_n \in \mathbb{R}^{n \times n}$ is the matrix associated with the discrete sine transform, defined as
\begin{equation}
\label{eq:defSn}
(S_n)_{ij} = \sqrt{\frac{2}{n+1}}\sin\left(\frac{ij\pi}{n+1}\right), \quad \forall i,j \in [n].
\end{equation}
In particular, the full spectral collocation matrix $B$ is orthogonal, i.e.\ it satisfies 
\begin{equation}
\label{eq:Poisson_orthogonal}
B^TB = I,
\end{equation} where $I$ is the identity matrix, because $S_n$ is orthogonal. Moreover, the local coherence of the matrix $B$ satisfies the upper bound
\begin{equation}
\label{eq:local_coherence_Poisson}
\max_{j \in [n]^d}(B_{qj})^2 
\leq \left(\frac{2}{(n+1)}\right)^d 
\leq \frac{2^d}{N}, \quad \forall q \in [n]^d.
\end{equation}
Therefore, in view of Theorem~\ref{thm:RIP_nonisometry}, drawing $m$ indices independently distributed according to the uniform measure over $[n]^d$, i.e.,
$$
\tau_1,\ldots,\tau_m 
\quad \text{ i.i.d. with }\quad  
\mathbb{P}\{\tau_i = q\} = \frac{1}{N}, \quad \forall q \in [n]^d, \; \forall i \in[m],
$$
it is natural to define the resulting compressive spectral collocation discretization as
\begin{equation}
\label{eq:CSC_system}
A x = b,
\end{equation}
where the matrix $A \in \mathbb{R}^{m \times N}$ and $b \in\mathbb{R}^m$ are defined as 
\begin{equation}
\label{eq:def_A_b_CSC}
A_{ij} = \sqrt{\frac{N}{m}} \, B_{\tau_i,j}, 
\quad b_i = \sqrt{\frac{N}{m}} \, c_{\tau_i}, \quad \forall i \in[m], \; \forall j \in [n]^d.
\end{equation}
The normalization by a factor $\sqrt{N/m}$ is done in order to ensure the restricted isometry property for $A$ (see Theorem~\ref{thm:RIP_nonisometry}). In particular, we observe that since the probability density is uniform, we have $D = \sqrt{N/m} \cdot I$ in \eqref{eq:def_A_from_B}.

The compressive spectral collocation solution is then computed by applying OMP in order to find a sparse solution to \eqref{eq:CSC_system}, up to normalizing the columns of $A$ with respect to the $\ell^2$ norm. The proposed method is summarized in Algorithm~\ref{alg:CSC}.
\begin{algorithm}
\normalsize
\textbf{Inputs:}
\begin{itemize}
\item $n\in \mathbb{N}$: order of the tensor product multi-index space $[n]^d$; 
%\item $s \in \mathbb{N}$: target sparsity;
\item $m \in \mathbb{N}$: number of randomized collocation points;
\item $K \in \mathbb{K}$: number of OMP iterations.
\end{itemize}
\textbf{Compressive spectral collocation:}
\begin{enumerate}
\item Draw $\tau_1,\ldots,\tau_m$ i.i.d.\ uniformly at random from  $[n]^d$;
\item Build $A \in \mathbb{R}^{m \times N}$ and $b \in \mathbb{R}^m$ according to \eqref{eq:def_A_b_CSC};
\item Build $M = \diag\left((\|a_j\|_2)_{j \in[n]^d}\right) \in \mathbb{R}^{N \times N}$, where $a_j$ is the $j$-th column of $A$;
\item Define $\widetilde{A} = A M^{-1} \in \mathbb{R}^{m \times N}$;
\item Compute $\hat{x}_K \in \mathbb{R}^N$ using OMP (Algorithm~\ref{alg:OMP}) with inputs $\widetilde{A}$, $b$, and $K$;
\item Define $\hat{x} = M \hat{x}_K$;
\item Define $\hat{u} = \displaystyle\sum_{j \in [n]^d} \hat{x}_j \psi_j$, with $\{\psi_j\}_{j \in [n]^d}$ given by \eqref{eq:def_psi_j}.
\end{enumerate}
\textbf{Output:}
\begin{itemize}
\item $\hat{u} \in C^{\infty}(\overline{\Omega})$: Compressive  spectral approximation to  \eqref{eq:diffusion_strong}.
\end{itemize}
\caption{\label{alg:CSC}Compressive spectral collocation}
\end{algorithm}

At least three questions naturally arise at this point:
\begin{enumerate}
\item[(i)] The compressive spectral collocation method looks tailored to the Poisson equation. Does this method work for nonconstant diffusion coefficients?  \label{q:nonconstant}
\item[(ii)] How to choose the input parameters $n$, $m$, and $K$ in Algorithm~\ref{alg:CSC}?  \label{q:param}
\item [(iii)] What are the benefits (if any) of the compressive approach with respect to the full one? \label{q:comparison}
\end{enumerate}
Answering to these questions will be the objective of the next two sections. In particular, Section~\ref{sec:theory} will focus on questions (i) and (ii). Applying the theory of CS introduced in Section~\ref{sec:CS}, we will give a sufficient condition on $\eta$ that implies a positive answer to (i) and propose a recipe for (ii). In Section~\ref{sec:numerics}, we will deal with question (iii) by showing the benefits of the compressive approach with respect to the full one through a numerical illustration. 

%In the next section, we will give an answer to (Q1) given $n \in \mathbb{N}$ and a target sparsity $s \leq N$ chosen and where we are aiming at recovering a certain best $s$-term approximation to $x^{\textnormal{full}}$, how many collocation points and iterations of OMP do we need? Moreover, we will find a sufficient condition on $\eta$ for (Q2) to admit a positive answer.

\section{Theoretical analysis}
\label{sec:theory}

In the previous section, we have proposed the compressive spectral collocation method for the diffusion equation \eqref{eq:diffusion_strong}, summarized in Algorithm~\ref{alg:CSC}. In this section, we see that, given $n \in \mathbb{N}$, in order to recover the best $s$-term approximation error to $x^{\textnormal{full}}$ (up to a universal constant)  with high probability, it is sufficient to choose the number of collocation points and the iterations of OMP such that
\begin{equation}
\label{eq:sufficient_m_K}
m \geq C 2^ds \ln^3(s) \ln(N) \quad \text{and} \quad
K = 24s,
\end{equation}
where $C>0$ is a universal constant independent of $s$, $n$, and $d$. This shows that for $s \ll N$, the number of collocation points to employ is substantially less than the dimension of the approximation space $N$. In particular, it scales linearly with respect to the target sparsity $s$, up to logarithmic factors. The main ingredients of the theoretical analysis are Theorem~\ref{thm:RIP_nonisometry} and Theorem~\ref{thm:OMP_recovery}. 

\subsection{Restricted isometry property}
 
Let us first consider the case of the Poisson Problem, where $\eta(z) = 1$ for every $z \in \overline{\Omega}$. We have seen that in this case the full spectral collocation matrix $B$ defined in \eqref{eq:def_B_c} is orthogonal (recall \eqref{eq:Poisson_orthogonal}) and that its local coherence satisfies the upper bound \eqref{eq:local_coherence_Poisson}. Therefore, a direct application of Theorem~\ref{thm:RIP_nonisometry} with $r = R = 1$, yields the following restricted isometry result.
\begin{theorem}
\label{thm:RIP_CSC_Poisson}
Let $d,s,N \in \mathbb{N}$, with $s \leq N$. Then, there exists a universal constant $c >0$ such that the following holds. For the Poisson equation, the full spectral collocation matrix $B \in \mathbb{R}^{N \times N}$ defined by \eqref{eq:def_B_c} is orthogonal and the corresponding compressive spectral collocation matrix $A \in \mathbb{R}^{m \times N}$ defined by \eqref{eq:def_A_b_CSC} has the restricted isometry property of order $s$ and constant $\delta$ with probability at least $1-N^{-\ln^s(s)}$, provided that
\begin{equation}
\label{eq:m_rate_Poisson}
m \geq c \, 2^d \delta^{-2} s \ln^3(s)\ln(N),
\end{equation}
and $s \geq c \, \delta^{-2} \ln(N)$.
\end{theorem}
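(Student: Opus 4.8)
The plan is to obtain Theorem~\ref{thm:RIP_CSC_Poisson} as a direct specialization of the general restricted isometry result Theorem~\ref{thm:RIP_nonisometry} to the Poisson case, in which the full collocation matrix $B$ has a particularly clean tensorized structure. First I would record the orthogonality of $B$: when $\eta \equiv 1$, the matrix defined in \eqref{eq:def_B_c} factors as the $d$-fold Kronecker product $S_n \otimes \cdots \otimes S_n$ with $S_n$ the discrete sine transform matrix \eqref{eq:defSn}, and since each $S_n$ is orthogonal so is their Kronecker product, giving $B^T B = I$ as asserted in \eqref{eq:Poisson_orthogonal}. This establishes the statement's first claim and pins the eigenvalue bounds of Theorem~\ref{thm:RIP_nonisometry} to $r = R = 1$.

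Next I would identify the remaining ingredients required by Theorem~\ref{thm:RIP_nonisometry}. The tall matrix is $B$ itself with $M = N$, so the hypothesis $M \ge N$ holds with equality. For the local coherence I would invoke the bound \eqref{eq:local_coherence_Poisson}, which permits the uniform choice $\nu_q = 2^d/N$ over $q \in [n]^d$; summing over the $N$ indices gives $\|\nu\|_1 = 2^d$, and hence the prescribed sampling density $p = \nu/\|\nu\|_1$ is exactly uniform, $p_q = 1/N$. This matches the uniform draw of $\tau_1,\ldots,\tau_m$ used to build the compressive matrix in \eqref{eq:def_A_b_CSC}. I would then check that the preconditioner of Theorem~\ref{thm:RIP_nonisometry} collapses, in this case, to $D = \diag(1/\sqrt{m R p_i}) = \sqrt{N/m}\,I$, so that the preconditioned matrix $DA$ of that theorem coincides verbatim with the compressive collocation matrix $A$ of \eqref{eq:def_A_b_CSC}. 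The RIP guarantee on $DA$ then transfers directly to $A$.

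It remains to evaluate the sample-complexity constant $\widetilde{c}$. Since $r = R = 1$ we have $1 - r/R = 0$, so the admissible range $1 - r/R < \delta < 1$ reduces to $0 < \delta < 1$ and
\[
\widetilde{c} = c\,\max\!\left(\frac{\|\nu\|_1}{R},1\right)\!\left(\delta - \left(1 - \frac{r}{R}\right)\right)^{-2} = c\,\max(2^d,1)\,\delta^{-2} = c\,2^d\delta^{-2}.
\]
Substituting this into the conditions $m \ge \widetilde{c}\,s\ln^3(s)\ln(N)$ and $s \ge \widetilde{c}\,\ln(N)$ of Theorem~\ref{thm:RIP_nonisometry} yields the sample-complexity bound \eqref{eq:m_rate_Poisson} together with its companion sparsity requirement, and the success probability is inherited as $1 - N^{-\ln^3(s)}$.

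I do not expect a genuine conceptual obstacle, since the argument is essentially a careful matching of the hypotheses of Theorem~\ref{thm:RIP_nonisometry}. The one delicate point is confirming that $D$ collapses to the scalar preconditioner $\sqrt{N/m}\,I$ and that the compressive matrix of \eqref{eq:def_A_b_CSC} is literally the $DA$ of the general theorem; ensuring that the normalization factors introduced in \eqref{eq:def_xi_j}, \eqref{eq:def_psi_j}, and \eqref{eq:def_A_b_CSC} cancel correctly is the only place where an error could realistically creep in.
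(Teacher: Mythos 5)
Your proposal is correct and follows essentially the same route as the paper, which obtains Theorem~\ref{thm:RIP_CSC_Poisson} as a direct application of Theorem~\ref{thm:RIP_nonisometry} with $r=R=1$, using the orthogonality \eqref{eq:Poisson_orthogonal} of the Kronecker-product sine-transform matrix and the uniform local coherence bound \eqref{eq:local_coherence_Poisson}, so that $\nu_q = 2^d/N$, $\|\nu\|_1 = 2^d$, the sampling density is uniform, and $D = \sqrt{N/m}\,I$ makes $DA$ coincide with the matrix of \eqref{eq:def_A_b_CSC}. The only divergences are cosmetic and in your favor: the direct application yields the companion condition $s \geq c\,2^d\delta^{-2}\ln(N)$ as you derive it (the paper's statement drops the $2^d$), and the stated success probability exponent $\ln^s(s)$ in the theorem is evidently a typo for $\ln^3(s)$, which your version correctly reproduces.
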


Let us now consider the case of a nonconstant coefficient $\eta$. In this case, $B$ is not necessarily orthogonal and, in order to apply Theorem~\ref{thm:RIP_nonisometry}, we need to estimate the minimum and maximum eigenvalue of the Gram matrix $B^TB$ and to find a suitble upper bound to the local coherence of $B$. Using this strategy, in the next theorem we give sufficient conditions on the diffusion coefficient $\eta$ able to guarantee the restricted isometry property for the compressive spectral collocation matrix $A$ with high probability. 
\begin{theorem} 
\label{thm:RIP_CSC}
Let $d,s,N \in \mathbb{N}$ with $s \leq N$, and $\eta \in C^1(\overline{\Omega})$ satisfying the following conditions:
\begin{align}
\label{eq:cond_eta_1}
& \eta_{\min} := \min_{z \in \overline{\Omega}} \eta(z)> 0,\\
& \|\eta\|_{L^\infty(\Omega)}\sum_{k=1}^d\|(\nabla\eta)_k\|_{L^\infty(\Omega)} < \frac{\pi}{2} \eta_{\min}^2. \label{eq:cond_eta_2}
\end{align}
Then, the full spectral collocation matrix $B$ defined by \eqref{eq:def_B_c} satisfies
\begin{equation}
\label{eq:bound_spectrum_BTB}
r \leq \lambda_{\min}(B^T B) \leq \lambda_{\max}(B^T B) \leq R,
\end{equation}
where
\begin{align}
\label{eq:def_r}
r & := \eta_{\min}^2 - \frac{2}{\pi} \|\eta\|_{L^\infty(\Omega)}\sum_{k = 1}^d \|(\nabla\eta)_k\|_{L^\infty(\Omega)},\\
\label{eq:def_R}
R & := \left(\|\eta\|_{L^\infty(\Omega)} + \frac{1}{\pi}\sum_{k = 1}^d\|(\nabla\eta)_k\|_{L^\infty(\Omega)}\right)^2.
\end{align}
Moreover, given $1-r/R < \delta < 1$ and provided
$$
m \geq \widetilde{C} \, s  \ln^3(s) \ln(N),
$$
and $s\geq \widetilde{C} \ln(N)$, where
$$
\widetilde{C} = C \cdot 2^d \left(\delta - \left(1-\frac{r}{R}\right)\right)^{-2},
$$
the matrix $A/\sqrt{R}$, where $A$ is defined as in \eqref{eq:def_A_b_CSC}  satisfies the restricted isometry property of order $s$ and constant $\delta$ with probability at least $1-N^{-\ln^3(s)}$.
\end{theorem}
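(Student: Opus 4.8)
The plan is to deduce the statement from Theorem~\ref{thm:RIP_nonisometry}, which needs only two deterministic facts about the full collocation matrix $B$ of \eqref{eq:def_B_c}: the two-sided spectral bound $r \le \lambda_{\min}(B^TB) \le \lambda_{\max}(B^TB) \le R$, and a uniform bound on the local coherence. Everything rests on the product rule
\[
\nabla\cdot(\eta\nabla\psi_j) = \eta\,\Delta\psi_j + \nabla\eta\cdot\nabla\psi_j,
\]
which splits $B = E B^0 + G$, where $E = \diag((\eta(t_q))_q)$, $B^0_{qj} = [\Delta\psi_j](t_q)$, and $G_{qj} = [\nabla\eta\cdot\nabla\psi_j](t_q)$. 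First I would check, exactly as in the Poisson case and using the normalizations \eqref{eq:def_xi_j}--\eqref{eq:def_psi_j}, that $B^0$ equals (up to sign) the Kronecker product $S_n\otimes\cdots\otimes S_n$, hence is orthogonal, so that $\|B^0 v\|_2 = \|v\|_2$ for all $v \in \mathbb{R}^N$.

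The crux is the operator-norm estimate $\|G\| \le \frac1\pi \sum_{k=1}^d \|(\nabla\eta)_k\|_{L^\infty(\Omega)}$. Writing $G = \sum_{k=1}^d E_k D_k$ with $E_k = \diag(((\nabla\eta)_k(t_q))_q)$ and $(D_k)_{qj} = [\partial_k\psi_j](t_q)$, I would differentiate \eqref{eq:def_psi_j} to obtain the factorization $D_k = T^{(k)} W_k$, in which $T^{(k)} = S_n\otimes\cdots\otimes C_n\otimes\cdots\otimes S_n$ carries the cosine matrix $(C_n)_{ij} = \sqrt{2/(n+1)}\cos(ij\pi/(n+1))$ in the $k$-th factor, and $W_k = \diag((j_k/(\pi\|j\|_2^2))_j)$ is a diagonal column weight. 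The bounds $1 \le j_k \le \|j\|_2$ give $\|W_k\| \le 1/\pi$ at once, so the whole estimate reduces to showing $\|T^{(k)}\| = \|C_n\| \le 1$. This is the step I expect to be the \emph{main obstacle}: $C_n$ is not orthogonal, so I would instead compute its Gram matrix directly, using $\sum_{l=1}^n \cos(ml\pi/(n+1)) = -1$ for even $m$ and $=0$ for odd $m$; grouping the indices of $C_nC_n^T$ by parity then makes it block diagonal with all eigenvalues in $[0,1]$, giving $\|C_n\|\le 1$. The triangle inequality then yields the bound on $\|G\|$.

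With $\|B^0v\|_2 = \|v\|_2$, $\|E\| \le \|\eta\|_{L^\infty(\Omega)}$, $\|EB^0v\|_2^2 \ge \eta_{\min}^2\|v\|_2^2$, and the bound on $\|G\|$ in hand, the spectral estimates follow from elementary quadratic-form manipulations on unit vectors $v$. For $R$ I would use $\|Bv\|_2 \le \|E\|\,\|B^0v\|_2 + \|G\|\,\|v\|_2 \le \sqrt{R}$ with $R$ as in \eqref{eq:def_R}. For $r$ I would expand $\|Bv\|_2^2 = \|EB^0v\|_2^2 + 2\langle EB^0v, Gv\rangle + \|Gv\|_2^2$, drop the nonnegative last term, bound $\|EB^0v\|_2^2 \ge \eta_{\min}^2$, and estimate the cross term from below by $-2\|\eta\|_{L^\infty(\Omega)}\|G\|$ via Cauchy--Schwarz; this reproduces precisely $r$ in \eqref{eq:def_r}. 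Note that assumption \eqref{eq:cond_eta_2} is exactly the requirement $r>0$, so the spectrum of $B^TB$ is bounded away from $0$.

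It remains to feed $r$, $R$, and the local coherence into Theorem~\ref{thm:RIP_nonisometry}. The entrywise bounds $|(S_n)_{ij}|, |(C_n)_{ij}| \le \sqrt{2/(n+1)}$ give $|B^0_{qj}| \le (2/(n+1))^{d/2}$ and $|(D_k)_{qj}| \le \frac1\pi (2/(n+1))^{d/2}$, whence $|B_{qj}| \le \sqrt{R}\,(2/(n+1))^{d/2}$ and $\max_{j}(B_{qj})^2 \le R\,2^d/N =: \nu_q$ uniformly in $q$. Since $M = N$ here, this yields $\|\nu\|_1 = R\,2^d$ and $\max(\|\nu\|_1/R,1) = 2^d$. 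Finally, for the uniform density $p_i = 1/N$ the preconditioner of Theorem~\ref{thm:RIP_nonisometry} is $D = \sqrt{N/(mR)}\,I$, so its preconditioned matrix coincides with $A/\sqrt{R}$ for the $A$ of \eqref{eq:def_A_b_CSC}; the theorem then delivers $\delta_s(A/\sqrt{R}) \le \delta$ on the admissible range $1 - r/R < \delta < 1$, with probability at least $1 - N^{-\ln^3(s)}$ and with the sample complexity governed by $\widetilde{C} = c\,2^d(\delta - (1-r/R))^{-2}$, as claimed.
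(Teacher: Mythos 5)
Your proposal is correct and follows essentially the same route as the paper: the same splitting of $B$ into the $\eta\,\Delta\psi_j$ part (a diagonal matrix times the orthogonal Kronecker product $S_n\otimes\cdots\otimes S_n$) and the $\nabla\eta\cdot\nabla\psi_j$ part (handled through the cosine matrix $C_n$ and the diagonal weight $j_k/(\pi\|j\|_2^2)$), the same Cauchy--Schwarz treatment of the cross term with the $\|Gv\|_2^2$ term dropped for the lower bound, the same entrywise local coherence bound $\nu_q = 2^dR/N$, and the same application of Theorem~\ref{thm:RIP_nonisometry} with uniform sampling density. Your parity-based verification that $\|C_n\|\le 1$ is just a repackaging of the paper's identity $C_n^TC_n = I - \frac{2}{n+1}Q_n$ with $Q_n$ positive semidefinite, so there is no substantive difference.
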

\begin{proof}
Let us consider the matrices $S_n \in \mathbb{R}^{n \times n}$ defined as in \eqref{eq:defSn} and $C_n \in \mathbb{R}^{n \times n}$ as
$$
(C_n)_{ij} := \sqrt{\frac{2}{n+1}} \cos\left(\frac{\pi ij}{n+1}\right), \quad \forall i,j \in [N].
$$
Using basic trigonometric formulas and Lagrange's trigonometric inequality, it is not difficult to show that
\begin{align}
\label{eq:property_SN}
S_n^TS_n &= I, \\
\label{eq:property_CN}
C_n^TC_n &= I - \frac{2}{n+1} Q_n,
\end{align}
where $I \in \mathbb{R}^{n \times n}$ is the identity matrix and  $Q_n \in \mathbb{R}^{n \times n}$ is a checkerboard-structured matrix whose entries are 1 on the diagonals of even order and 0 on the diagonals of odd order, namely
$$
(Q_n)_{ij} = \frac{1 - (-1)^{i+j+1}}{2}, \quad \forall i,j \in [n].
$$
As already observed, $S_n$ is orthogonal. On the other hand, $C_n$ is ``almost orthogonal'', up to the corrective term $-\frac{2}{n+1}Q_n$. Now, using the chain rule 
\begin{equation}
\label{eq:expression_B_entries}
-\nabla\cdot(\eta \nabla \psi_j)(t_q) 
= -\eta(t_q) \Delta \psi_j(t_q) - \nabla \eta(t_q) \cdot \nabla \psi_j(t_q), \quad \forall q,j\in[n]^d,
\end{equation}
we see that the full spectral collocation matrix $B$ defined in \eqref{eq:def_B_c} has the form 
$$
B = B_1 + B_2,
$$
where
\begin{align*}
B_1 & = -D_0 \bigotimes_{k = 1}^d S_n,\\
B_2 & = -\sum_{k = 1}^d D_k \left(\bigotimes_{\ell = 1}^{k-1}S_n \otimes (C_n E) \otimes \bigotimes_{\ell = k+1}^d S_n\right) F.
\end{align*}
and
\begin{align*}
D_0 & = \diag\left((\eta(t_q))_{q \in [n]^d}\right) \in \mathbb{R}^{N \times N},\\
D_k & = \diag\left(((\nabla\eta)_k(t_q))_{q \in [n]^d}\right) \in \mathbb{R}^{N \times N}, \quad \forall k \in [d],\\
E & = \diag\left((\pi j)_{j \in[n]}\right) \in \mathbb{R}^{n \times n},\\
F & = \diag\left(\left(\frac{1}{\pi^2\|j\|_2^2}\right)_{j \in[n]^d}\right) \in \mathbb{R}^{N \times N}.
\end{align*}
In particular, we have
$$
\|B v\|_2^2 = \|B_1 v\|_2^2 + \|B_2 v\|_2^2 + 2 v^T B_1^T B_2 v, 
\quad \forall v \in \mathbb{R}^N.
$$ 
Now, we estimate the three terms in the right-hand side separately. Recalling \eqref{eq:property_SN}-\eqref{eq:property_CN}, the first term can be estimated as 
\begin{equation}
\label{eq:bound_B2u}
\eta_{\min} \|v\|_2 \leq \|B_2 v\|_2  \leq  \|\eta\|_{L^{\infty}(\Omega)} \|v\|_2, \quad \forall v \in \mathbb{R}^N.
\end{equation}
As for the second term, we have
$$
\|B_1 v\|_2 
\leq \sum_{k = 1}^d\|(\nabla\eta)_k\|_{L^\infty(\Omega)} \left\|\left(\bigotimes_{\ell = 1}^{k-1}S_n \otimes (C_n E) \otimes \bigotimes_{\ell = k+1}^d S_n\right) F v\right\|_2.
$$
Recalling properties \eqref{eq:property_SN}-\eqref{eq:property_CN}, noticing that $Q_n$ is positive semidefinite, using standard properties of the Kronecker product, and defining $w := F v$, we see that
\begin{align*}
&\left\|\left(\bigotimes_{\ell = 1}^{k-1}S_n \otimes (C_n E) \otimes \bigotimes_{\ell = k+1}^d S_n\right) F v\right\|_2^2 \\ 
& \quad = w^T \left(\bigotimes_{\ell = 1}^{k-1}S_n^TS_n \otimes (E C_n^TC_n E) \otimes \bigotimes_{\ell = k+1}^d S_n^TS_n\right) w\\
& \quad  = w^T \left(\bigotimes_{\ell = 1}^{k-1} I_n \otimes (E C_n^TC_n E) \otimes \bigotimes_{\ell = k+1}^d I_n\right) w\\
& \quad  = w^T \left(\bigotimes_{\ell = 1}^{k-1} I_n \otimes E^2 \otimes \bigotimes_{\ell = k+1}^d I_n\right) w
-\frac{2}{n+1} w^T \left(\bigotimes_{\ell = 1}^{k-1} I_n \otimes (E Q_n E) \otimes \bigotimes_{\ell = k+1}^d I_n\right) w\\
& \quad\leq \left\|\diag\left(\left(\frac{\pi j_k}{\pi^2 \|j\|_2^2}\right)_{j \in [n]^d}\right) v\right\|_2^2
\leq \frac{1}{\pi^2} \|v\|_2^2.
\end{align*}
As a result, we obtain
\begin{equation}
\label{eq:bound_B1u}
0 \leq \|B_2 v\|_2 \leq \frac{1}{\pi}\left(\sum_{k = 1}^d\|(\nabla\eta)_k\|_{L^\infty(\Omega)}\right)\|v\|_2, \quad \forall v \in \mathbb{R}^N.
\end{equation}

Finally, using the Chauchy-Schwartz inequality and combining the inequalities  \eqref{eq:bound_B2u} and \eqref{eq:bound_B1u}, the third term can be estimated as
\begin{equation}
\label{eq:bound_u*B1*B2u}
|v^T B_1^TB_2 v| 
\leq \frac{1}{\pi}\|\eta\|_{L^\infty(\Omega)}\sum_{k = 1}^d\|(\nabla\eta)_k\|_{L^\infty(\Omega)} \|v\|_2^2.
\end{equation}
Finally, combining \eqref{eq:bound_B2u}, \eqref{eq:bound_B1u}, and \eqref{eq:bound_u*B1*B2u} yields the spectral bound \eqref{eq:bound_spectrum_BTB} under sufficient conditions \eqref{eq:cond_eta_1}-\eqref{eq:cond_eta_2} on the diffusion coefficient $\eta$.

The last step is the local coherence upper bound. Recalling \eqref{eq:expression_B_entries} and the definition \eqref{eq:def_R} of $R$, we see that
\begin{align*}
\max_{j \in[n]^d} (B_{qj})^2
& = \max_{j \in [n]^d} \left(-\eta(t_q)\Delta\psi_j(t_q) + \nabla\eta(t_q)\cdot\nabla \psi_j(t_q)\right)^2 \\
& \leq  \left(\frac{2}{n+1}\right)^d \max_{j \in[n]^d}\left(|\eta(t_q)|  + \sum_{k = 1}^d |(\nabla\eta)_k(t_q)| \frac{\pi j_k}{\pi^2 \|j\|_2^2} \right)^2
 \leq \frac{2^d R}{N} =: \nu_q.
\end{align*}
This choice of the local coherence upper bound $\nu$ yields  $\|\nu\|_1 \leq 2^d R$ and $p_q = \nu_q/\|\nu\|_1 = 1/N$, for every $q \in [N]$. Finally, a direct application of Theorem~\ref{thm:RIP_nonisometry} completes the proof. \hfill $\square$
\end{proof}

%For the Poisson equation (i.e., $\eta \equiv 1$) conditions \eqref{eq:cond_eta_1}-\eqref{eq:cond_eta_2} are trivially satisfied and Theorem~\ref{thm:RIP_CSC} holds with constants $r = R = 1$. Therefore, in that case, Theorem~\ref{thm:RIP_CSC} and  Theorem~\ref{thm:RIP_CSC_Poisson} coincide.

Let us take a closer look to the sufficient condition \eqref{eq:cond_eta_2} on the diffusion coefficient $\eta$. First of all, it is homogeneous in $\eta$, as it is natural to be expected. Moreover, this condition becomes more and more restrictive as the dimension $d$ increases, which is another tangible effect of the curse of dimensionality. Nevertheless, the following example shows that \eqref{eq:cond_eta_2} can be satisfied in practice.

\begin{example}
Let us consider an affine diffusion coefficient of the form 
$$
\eta(z) = 1 + w^Tz, \quad \forall z \in\overline{\Omega},
$$
where $w \in \mathbb{R}^d$ with $w \geq 0$. In this case,  \eqref{eq:cond_eta_2} is equivalent to 
$$
\|w\|_1 < \frac{1}{2}\left(\sqrt{1+2\pi}-1\right)\approx 0.85.
$$ 
As $d$ gets larger, the above condition becomes more and more restrictive. One possible way to mitigate the effect of $d$ on this condition is by requiring the gradient $w = \nabla \eta$ to be sparse.\hfill $\blacksquare$ 

\end{example}

%\begin{remark} 
%An inspection of the proof of Theorem~\ref{thm:RIP_CSC} reveals the sharper and nonconstant local coherence upper bound
%$$
%\nu_q = \frac{2^d}{N}\left(|\eta(z_q)| + \frac{1}{\pi} \sum_{k = 1}^d |(\nabla\eta)_k(z_q)|\right)^2, \quad \forall q \in [n]^d.
%$$
%This implies 
%$$
%\|\nu\|_1 \leq \frac{2 \cdot 2^d}{N} \left(\|\{\eta(z_q)\}_{q\in[n]^d}\|_2^2 + \frac{d}{\pi^2}\sum_{k = 1}^d \|\{(\nabla \eta)_k(z_q)\}_{q \in [n]^d}\|_2^2\right) 
%$$
%Passing to the limit, we obtain the upper bound
%$$
%\lim_{n \to +\infty}\|\nu\|_1 \leq 2 \cdot 2^d \left(\|\eta\|_{L^2(\Omega)}^2 + \frac{d}{\pi^2} \sum_{k = 1}^d \|(\nabla \eta)_k\|_{L^2(\Omega)}^2\right).
%$$ 
%\end{remark}
We conjecture that condition  \eqref{eq:cond_eta_2} is suboptimal and that it could be improved. How to make it less restrictive is an object of future investigation.
Equipped with a restricted isometry property result for the compressive spectral collocation matrix $A$, we can now discuss the recovery guarantees of the proposed approach.

\subsection{Recovery guarantees (discussion)}
\label{sec:recovery_discussion}

In view of Theorem~\ref{thm:OMP_recovery}, the restricted isometry property is a sufficient condition for OMP to recover the best $s$-term approximation error to a given signal up to a universal constant in $K = 24s$ iterations. In this section, we discuss the implications of this result for the compressive spectral collocation approach. A fully rigorous analysis of the recovery guarantees is beyond the objectives of this paper and is left to future work.

In order to combine the restricted isometry result (Theorem~\ref{thm:RIP_CSC}) with the OMP recovery result (Theorem~\ref{thm:OMP_recovery}), one has to take into account the effect of the $\ell^2$-normalization of the columns of $A$ onto the restricted isometry constant. Let $\widetilde{A}$ be the normalized version of $A$, as defined in Algorithm~\ref{alg:CSC}. Then, it is not difficult to show that
$$
\delta_s(\widetilde{A}) 
\leq \frac{2\delta_s(A)}{1-\delta_s(A)}, \quad \forall s\in \mathbb{N}, \; s \leq N.
$$
 In particular, the condition $\delta_{26s}(\widetilde{A}) < 1/6$ required to apply Theorem~\ref{thm:OMP_recovery} and ensure the recovery via OMP, is implied by $\delta_{26s}(A) < 1/13$. 
 
Due to the additional constraint $\delta > 1-r/R$ required by Theorem~\ref{thm:RIP_CSC}, we see that in order to be able to choose $\delta < 1/13$, we need 
\begin{equation}
\label{eq:cond_OMP_recovery}
1-\frac{r}{R} < \frac{1}{13} \; \Longrightarrow \; \frac{r}{R} > \frac{12}{13} \approx 0.92,
\end{equation}
where $r$ and $R$ are defined as in \eqref{eq:def_r}-\eqref{eq:def_R}.

Now, let us notice that a solution $x^{\text{full}}$ to the full system \eqref{eq:full_SC_system} is also a solution to the compressive system \eqref{eq:CSC_system}. Using Theorem~\ref{thm:OMP_recovery} and the fact that $\{\Delta\xi_j\}_{j \in \mathbb{N}^d}$ is orthonormal in $L^2(\Omega)$, we can estimate the error between the full spectral approximation $u^{\textnormal{full}}$ and the compressive spectral approximation $\hat{u}$ computed by choosing $m$ and $K$  as in \eqref{eq:sufficient_m_K} as
\begin{equation}
\label{eq:CSC-full_error}
\|\Delta (u^{\textnormal{full}} - \hat{u})\|_{L^2(\Omega)} 
= \frac{\|x^{\textnormal{full}} - \hat{x}\|_2}{(n+1)^{d/2}} 
\leq C \cdot \frac{\sigma_s(x^{\textnormal{full}})_1}{(n+1)^{d/2}\sqrt{s}},
\end{equation}
where $C>0$ is a universal constant. ($C$ depends on the universal constant of Theorem~\ref{thm:OMP_recovery} and on $\sqrt{1+\delta}$, due to the $\ell^2$-normalization of the columns of $A$. Moreover, notice that we can fix, e.g., $\delta = 1/14 < 1/13$). 
%Interestingly, the effect of the dimension $d$ is in favor of the compressive approach. 
It is worth observing that when $x^{\textnormal{full}}$ is $s$-sparse, the compressive spectral collocation recovers the coefficients of $u^{\textnormal{full}}$ exactly.

%\begin{remark}
%Condition \eqref{eq:cond_OMP_recovery} is significantly more  restrictive than \eqref{eq:cond_eta_2} and we conjecture that it is highly suboptimal. Indeed, the numerical experiments in the next section show that the compressive spectral collocation method has nice recovery performances even though \eqref{eq:cond_OMP_recovery} is not satisfied. \red{[Check that it is not satisfied]}
%\flushright{$\blacksquare$}
%\end{remark}

\begin{remark}
Combining \eqref{eq:CSC-full_error} with the triangle and the Poincar\'e inequalities yields
\begin{align*}
|u-\hat{u}|_{H^1(\Omega)} 
& \leq |u-u^{\textnormal{full}}|_{H^1(\Omega)} + |u^{\textnormal{full}}-\hat{u}|_{H^1(\Omega)}\\
& \leq |u-u^{\textnormal{full}}|_{H^1(\Omega)} + C_{\Omega}\cdot \|\Delta(u^{\textnormal{full}}-\hat{u})\|_{L^2(\Omega)}\\
& \leq |u-u^{\textnormal{full}}|_{H^1(\Omega)} + C_{\Omega}\cdot C \cdot \frac{\sigma_s(x^{\textnormal{full}})_1}{(n+1)^{d/2}\sqrt{s}},
\end{align*}
where $C_\Omega$ is the Poincar\'e constant of $\Omega$.
In this way, an estimate of $|u-u^{\textnormal{full}}|_{H^1(\Omega)}$ can be converted to an estimate of $|u-\hat{u}|_{H^1(\Omega)}$. The error term $|u-u^{\text{full}}|_{H^1(\Omega)}$ can be studied using the tools in \cite[Section 6.4.2]{canuto2012spectral}. These tools allow to compare $|u-u^{\text{full}}|_{H^1(\Omega)}$ to the best linear approximation error of the solution $u$ with respect to the basis $\{\psi_j\}_{j\in[n]^d}$ in the $H^1(\Omega)$-seminorm. In turn, the best linear approximation error can be estimated by assuming enough regularity of $u$ with respect to standard or mixed Sobolev norms when fulfilling suitable boundary conditions (see \cite[Lemma 3.4 and Lemma 3.5]{adcock2010multivariate}). %A complete and rigorous error analysis is beyond the scopes of this paper and is left to future work. 
\hfill $\blacksquare$
\end{remark}

\section{Numerical experiments}
\label{sec:numerics}

We conclude by illustrating some numerical experiments that show the robustness of the spectral collocation method described in Algorithm~\ref{alg:CSC} for the numerical solution of the diffusion equation \eqref{eq:diffusion_strong} when the solution is sparse or compressible. The experiments demonstrate that the compressive approach is able to outperform the full one both from the accuracy and the efficiency viewpoints when the solution is sparse. When the solution is compressible, the compressive method can reduce the computational cost of the full method while preserving good accuracy. 

We underline that the comparison is made without using fast transforms, which could considerably accelerate performance of both methods. 

Given the order $n$ of the ambient multi-index set $[n]^d$ and a target sparsity $s \in \mathbb{N}$, in all the numerical experiments we define the number of collocation points and of OMP iterations as
\begin{equation}
\label{eq:choice_m_K}
m = \lceil 2 s \ln(N) \rceil \quad \text{and} \quad K = s,
\end{equation}
numerically showing that the sufficient condition \eqref{eq:sufficient_m_K} is rather pessimistic in practice. Moreover, we focus on a two-dimensional diffusion equation with nonconstant coefficient 
\begin{equation}
\label{eq:nonconstant_coefficient}
\eta(z) = 1+\frac14(z_1+z_2), \quad \forall z \in \overline{\Omega},
\end{equation}
satisfying condition \eqref{eq:cond_eta_2}. 

All the numerical experiments have been performed in \textsc{Matlab$^\text{\textregistered}$} R2017b version 9.3 64-bit on a MacBook Pro equipped with a 3 GHz Intel Core i7 processor and with 8 GB DDR3 RAM. We have employed the OMP implementation provided by the \textsc{Matlab$^\text{\textregistered}$} package \textsf{OMP-Box v10} \cite{rubinstein2008efficient}. 

\subsection{Recovery of sparse solutions} 

We start by comparing the full and the compressive spectral collocation approaches for the recovery of sparse solutions. 

Given $s,n \in \mathbb{N}$ with $s \leq n^2=:N$, we consider $s$-sparse solutions $x$ randomly generated as follows. First, we draw $s$ indices from $[N]$ uniformly at random. Then, we fill the corresponding entries with $s$ independent realizations of a standard Gaussian variable $N(0,1)$. This is implemented in \textsc{Matlab$^\text{\textregistered}$} using the commands \texttt{randperm} and \texttt{randn}, respectively. For each randomly-generated vector $x$, we run the full and the compressive methods 5 times. The recovery error of the full and of the compressive solution is measured using the relative discrete $\ell^2$-error of the coefficients, namely,
$$
\frac{\|x^{\text{full}}-x\|_2}{\|x\|_2}\quad \text{and} \quad \frac{\|\hat{x}-x\|_2}{\|x\|_2}.
$$ 

The results are shown in Fig.~\ref{figure1}, where we plot the relative error as a function of the computational cost for $n = 32$ (corresponding to $N =  1024$) and $s = 2,4,8,16,32$. Recalling \eqref{eq:choice_m_K}, these values correspond to $m = 28, 56, 111, 222, 444$.
\begin{figure}[t]
\centering
\begin{tabular}{ccc}
%$\eta(z_1,z_2)\equiv 1$ & \\
%\vspace{0.01cm} & \\
%\includegraphics[height = 4.5cm]{}&
%\includegraphics[height = 4.5cm]{}\\
%\vspace{0.01cm} & \\
\includegraphics[height = 3.7cm]{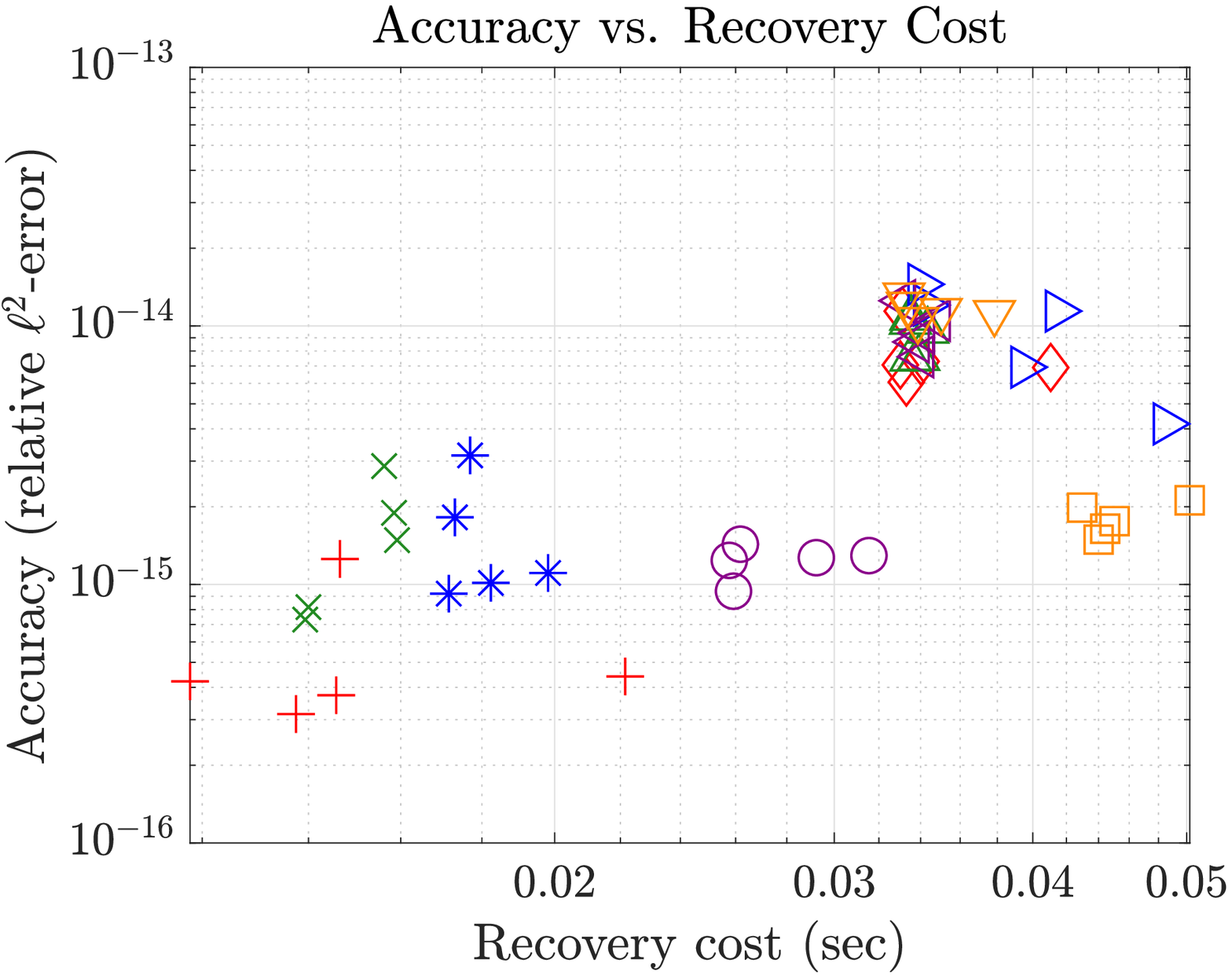}&
\includegraphics[height = 3.7cm]{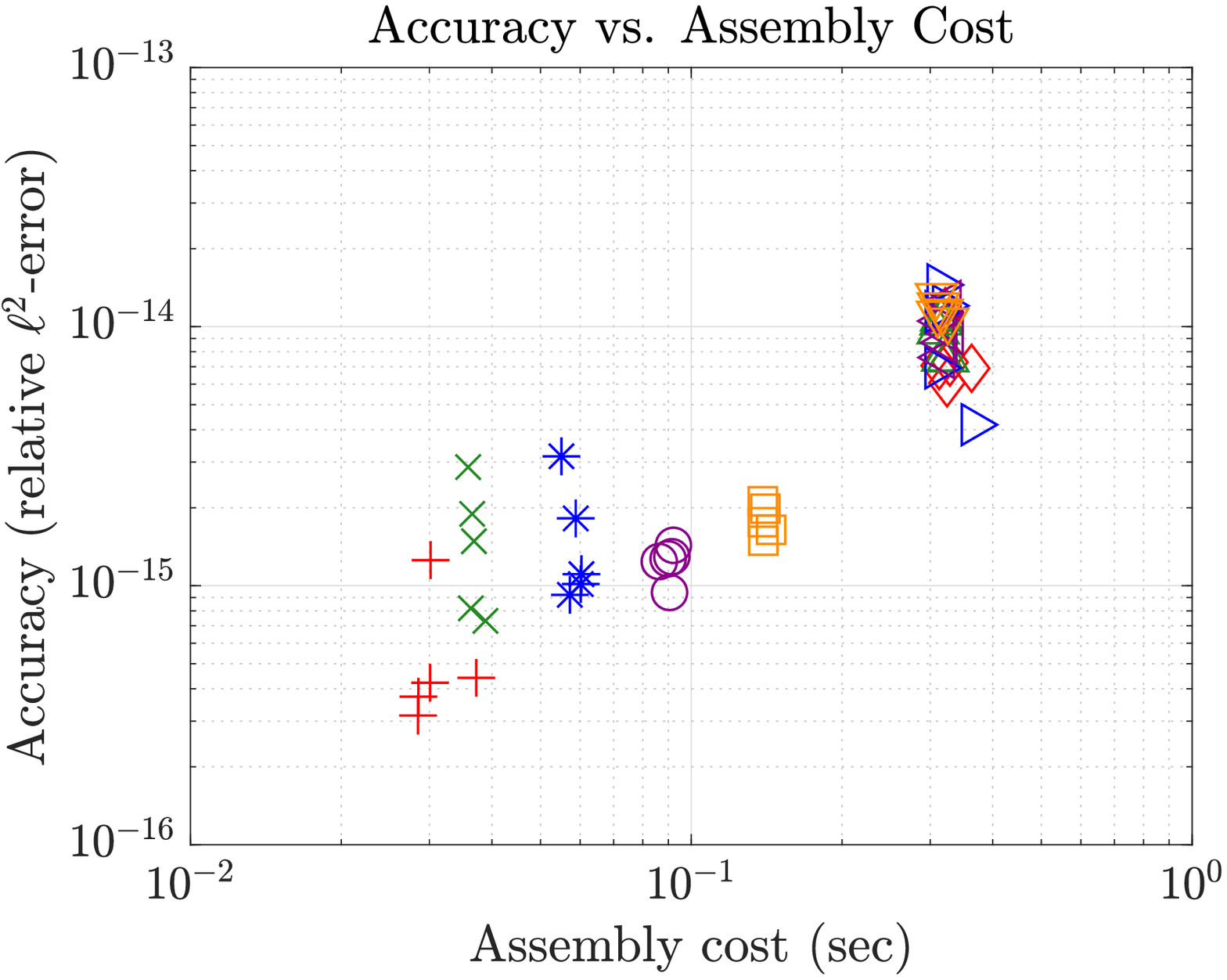}&
\raisebox{0.7cm}{\includegraphics[height = 2.5cm]{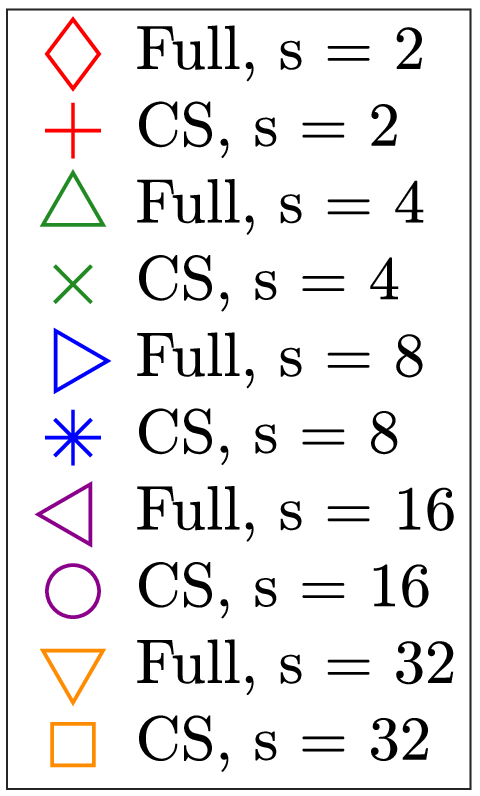}}
%\vspace{0.01cm} & \\
%\includegraphics[height = 2.5cm]{}&
%\includegraphics[height = 2.5cm]{figures/Figure_2_right_c}
\end{tabular}
\caption{\label{figure1}Accuracy vs. cost plots for the full and the compressive spectral collocation approaches  for the recovery of randomly generated $s$-sparse solutions to the diffusion equation with nonconstant coefficient $\eta$ defined by \eqref{eq:nonconstant_coefficient}. Different colors refer to different sparsity levels: $s=2$ (red), $s=4$ (green), $s = 8$ (blue), $s = 16$ (magenta), and $s = 32$ (orange). The markers $\diamond$, $\triangle$, $\triangleright$, $\triangleleft$, $\triangledown$ correspond to the full approach with $s = 2,4,8,16,32$, respectively. The markers $+$,$\times$,$\ast$, $\circ$, and $\square$ refer to the compressive approach with $s = 2,4,8,16,32$, respectively.}
\end{figure}
The computational cost is distinguished in assembly and recovery cost:
\begin{itemize}
\item For the full approach, the assembly cost is the time employed to build $B$ and $c$ as in \eqref{eq:def_B_c} and the recovery cost is the time employed by the backslash \textsc{Matlab$^\text{\textregistered}$} command to solve the linear system \eqref{eq:full_SC_system}. 
\item For the compressive approach, the assembly cost is the time employed to randomly generate the multi-indices $\tau_1,\ldots,\tau_m$ and to build $A$ and $b$ as in \eqref{eq:def_A_b_CSC} and the recovery cost is the time needed to recover the solution to \eqref{eq:CSC_system} via OMP, including the time to normalize the columns of $A$ with respect to the $\ell^2$ norm and the time to rescale the entries of the OMP solution accordingly, as prescribed by Algorithm~\ref{alg:CSC}. 
\end{itemize}
Both approaches have a high level of accuracy, around $10^{-15}$ and $10^{-14}$. The markers corresponding to the compressive approach are closer to the lower left corner of the plot. This shows that when dealing with exact sparsity, the compressive approach is more advantageous both in terms of accuracy and of computational cost.

Let us now assess cost and accuracy of both approaches in a more systematic way. Fig.~\ref{figure2} shows the box plots generated after repeating the same random experiment as before 100 times and for $s = 2,4,8,16,32,64$. Recalling \eqref{eq:choice_m_K}, the corresponding numbers of collocation points are $m = 28, 56, 111, 222, 444, 888$. For the full approach we also compare backslash with OMP.
% NEW
In practice, the backslash approach simply computes a solution to \eqref{eq:full_SC_system} as $B \backslash c$, whereas the OMP-based approach computes an $s$-sparse approximate solution to \eqref{eq:full_SC_system} (up to normalization of the columns of $B$) via OMP.
% END NEW
\begin{figure}[t]
\centering
\begin{tabular}{ccc}
\hspace{0.8cm} Full (backslash recovery) & Full (OMP recovery) & Compressive \\
\includegraphics[height = 3.5cm]{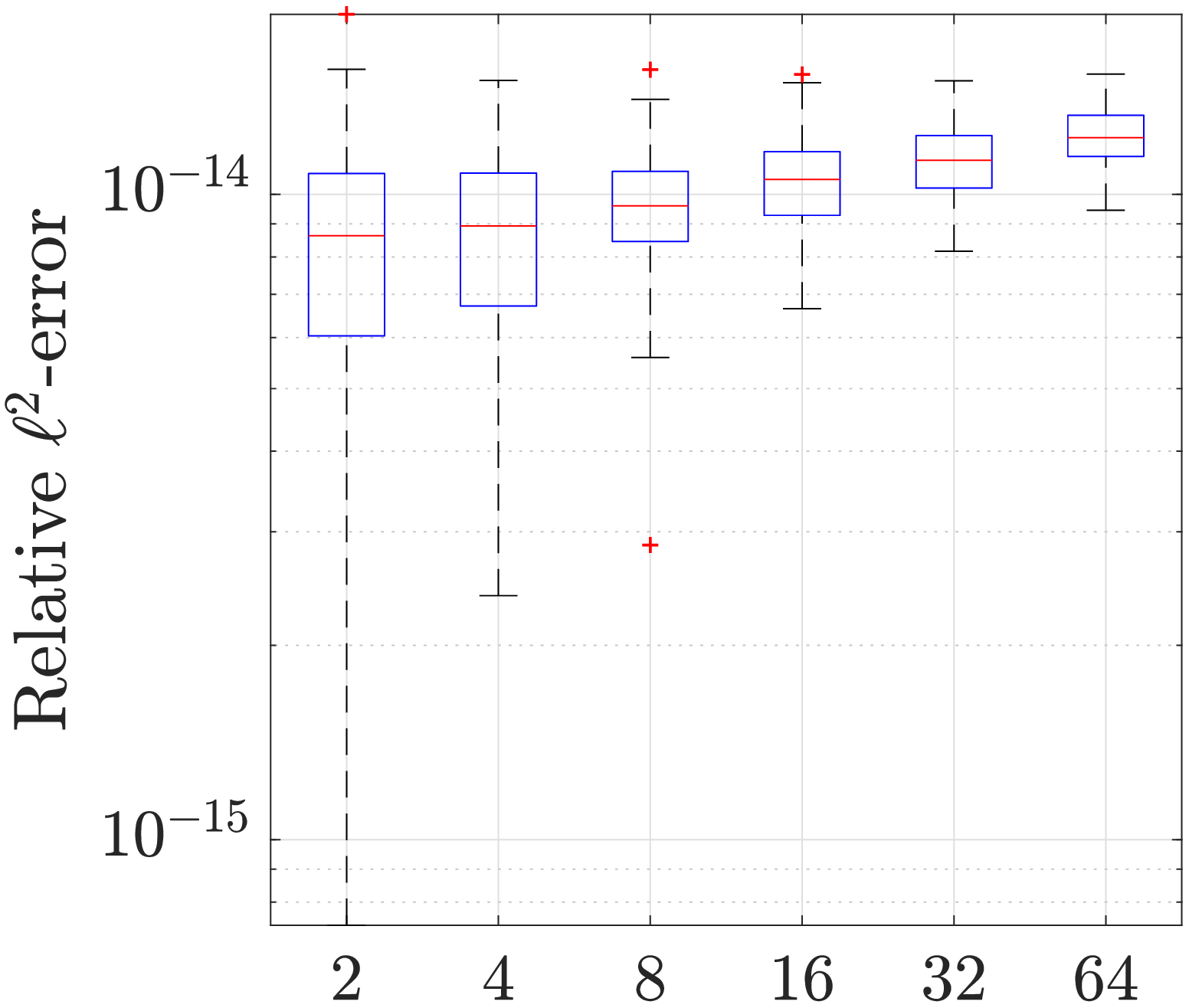} &
\includegraphics[height = 3.5cm]{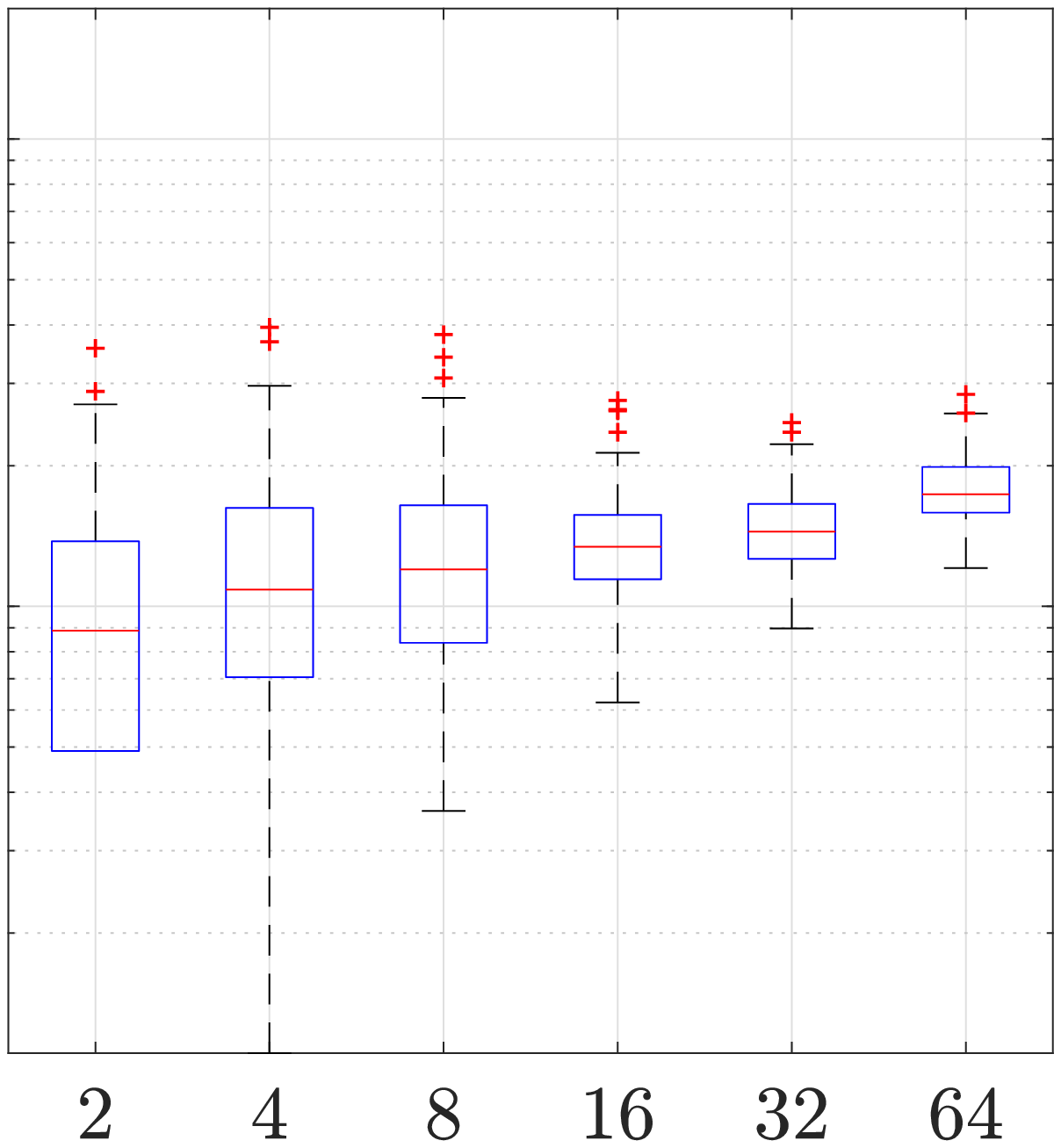} &
\includegraphics[height = 3.5cm]{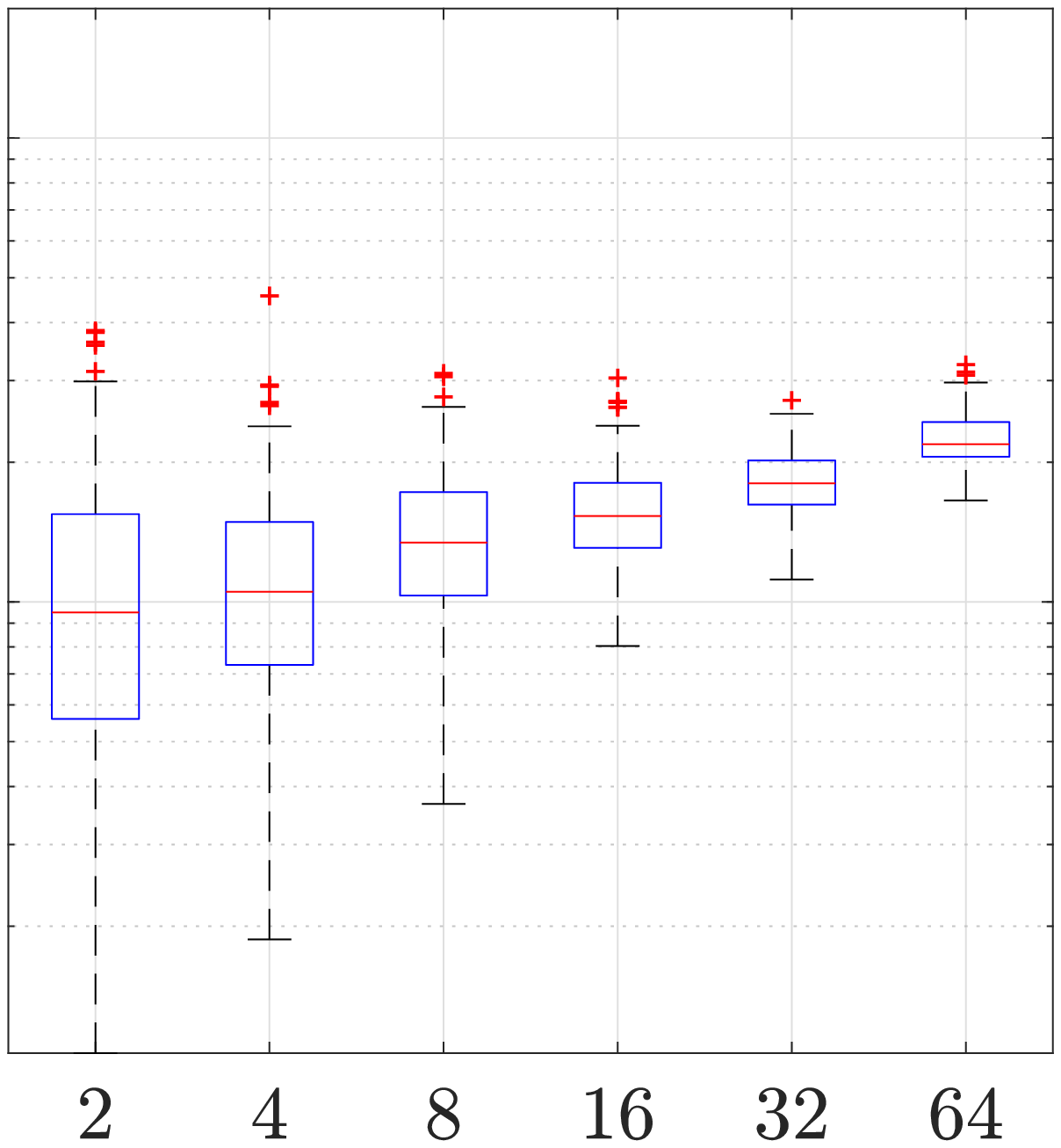} \\
\includegraphics[height = 3.5cm]{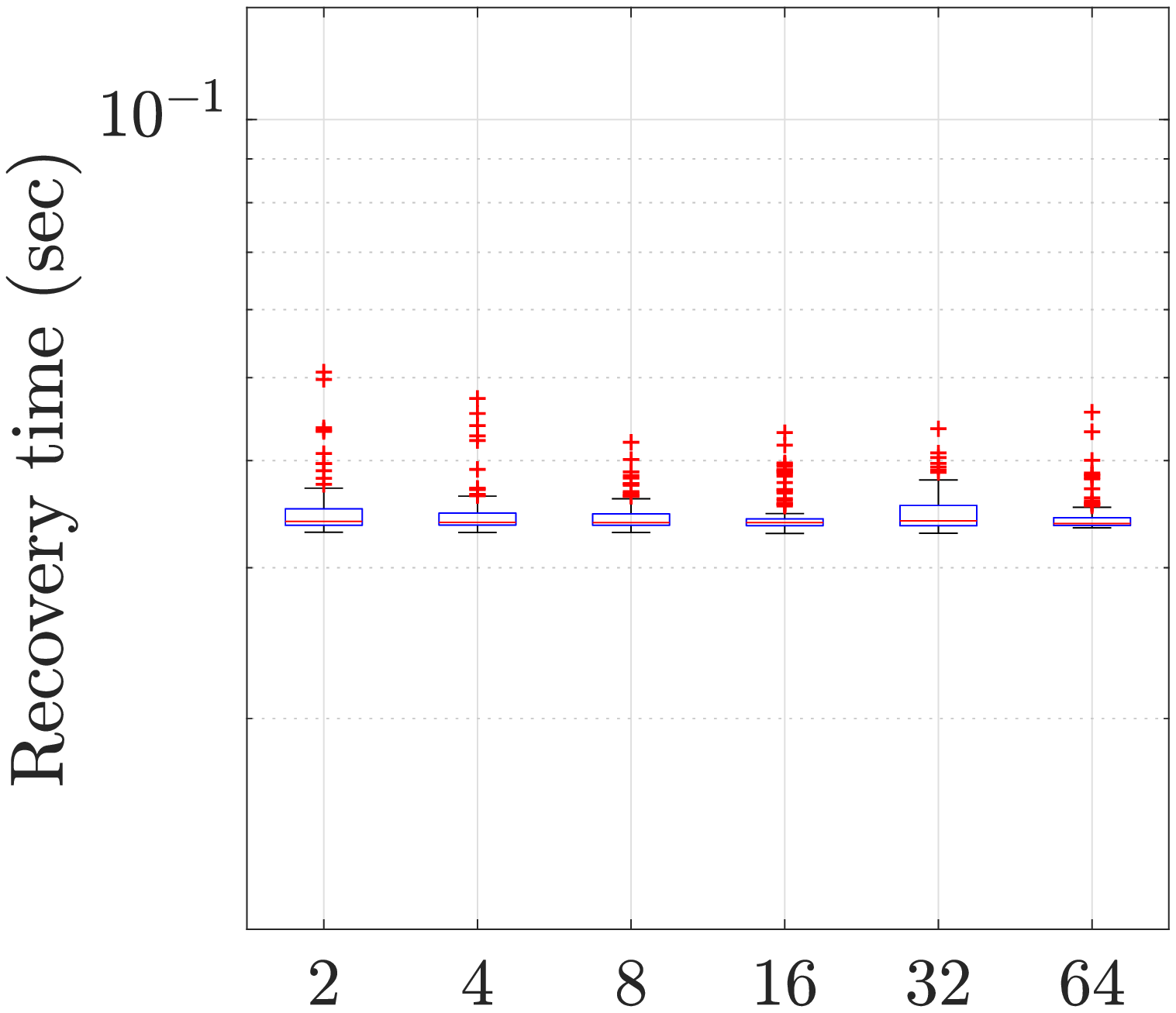} &
\includegraphics[height = 3.5cm]{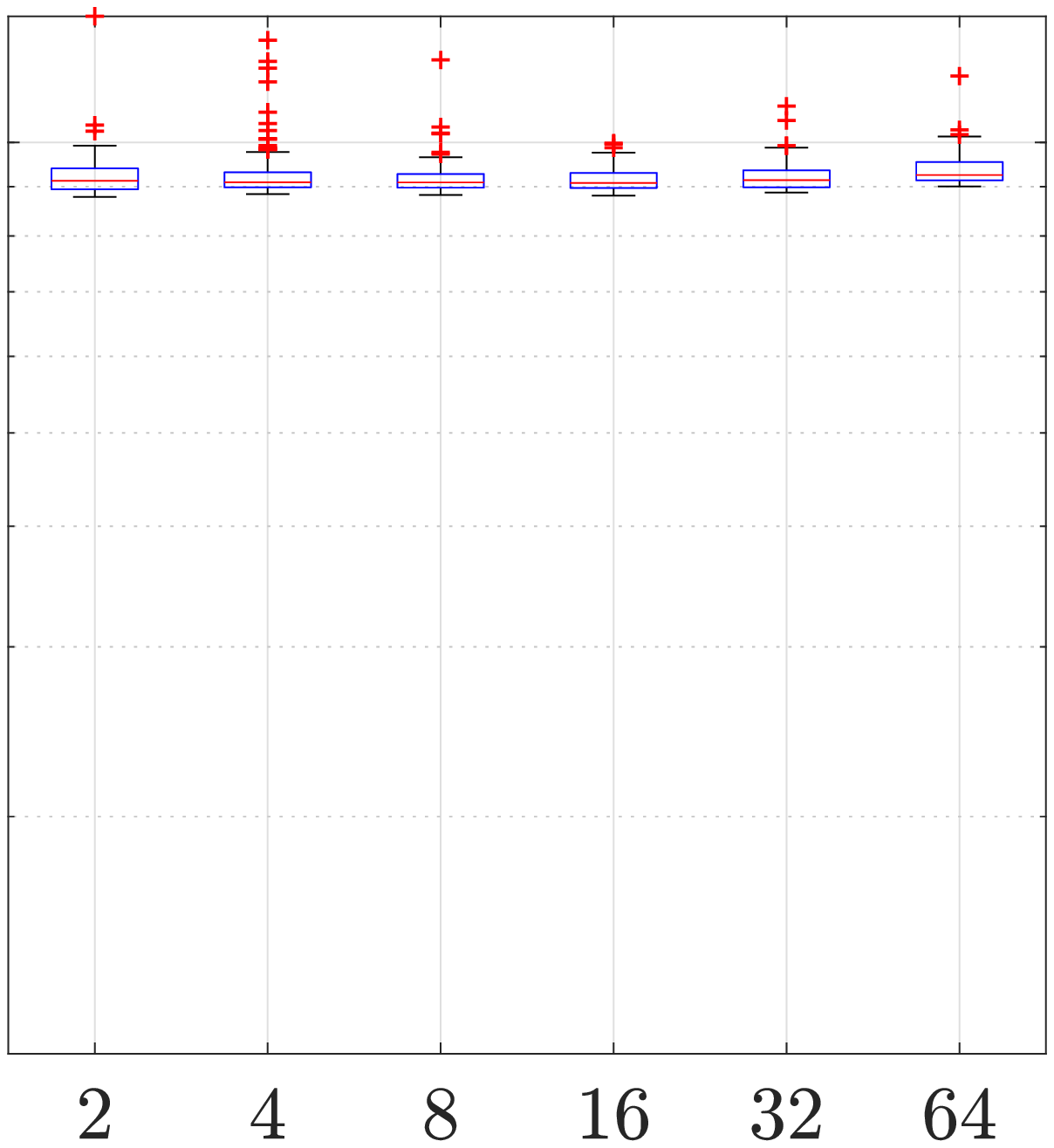} &
\includegraphics[height = 3.5cm]{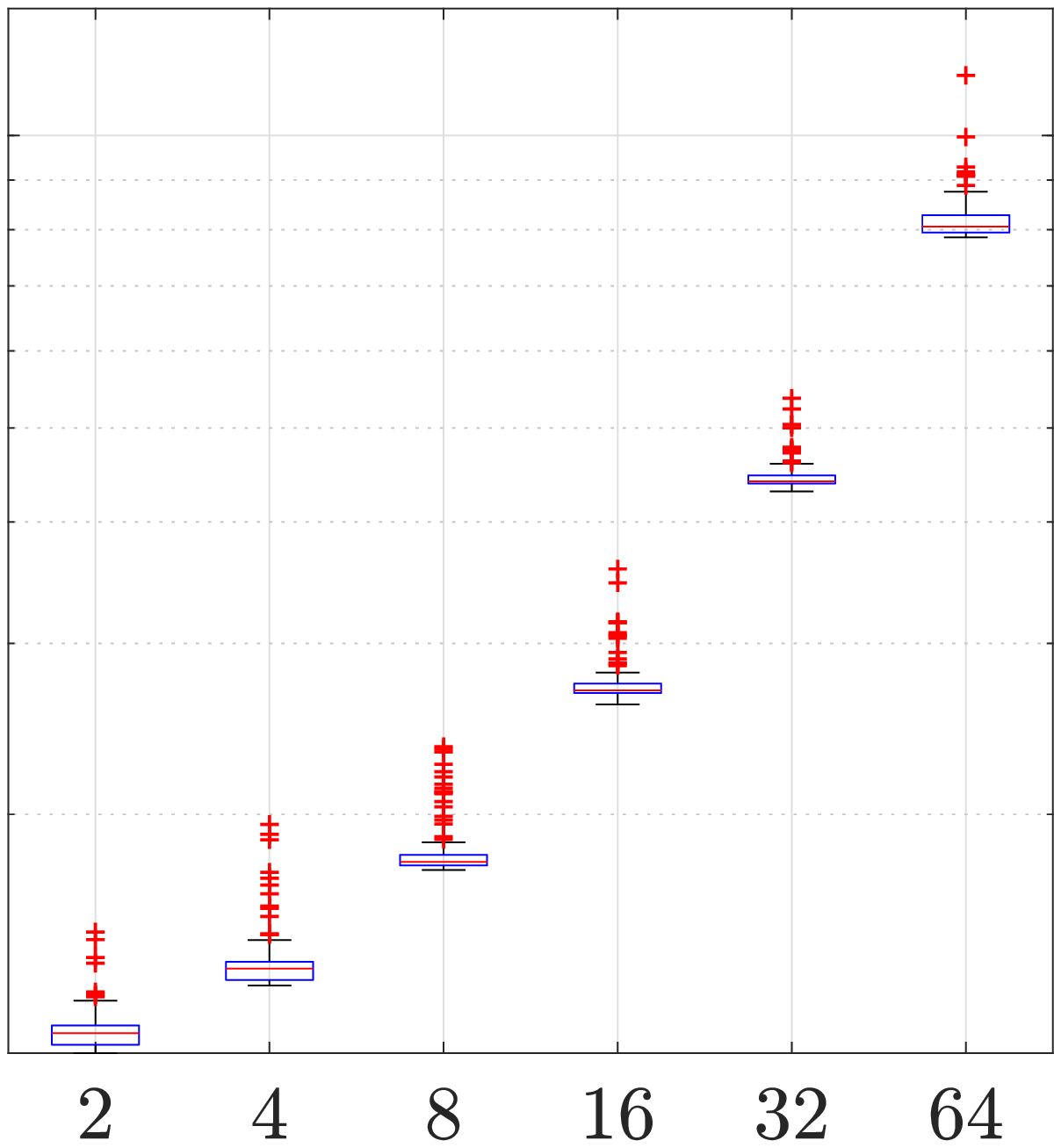} \\
\includegraphics[height = 3.8cm]{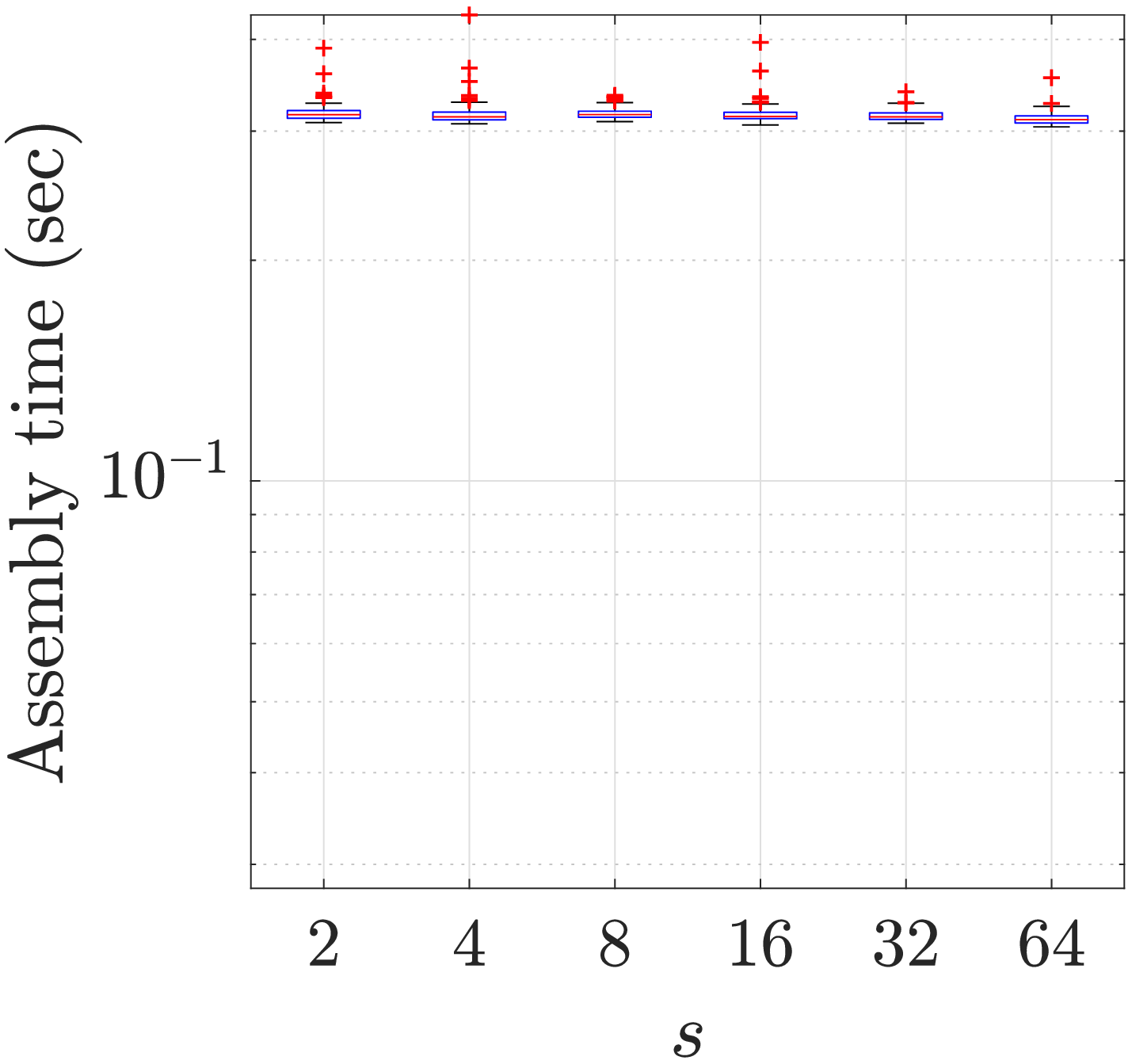} & 
\includegraphics[height = 3.8cm]{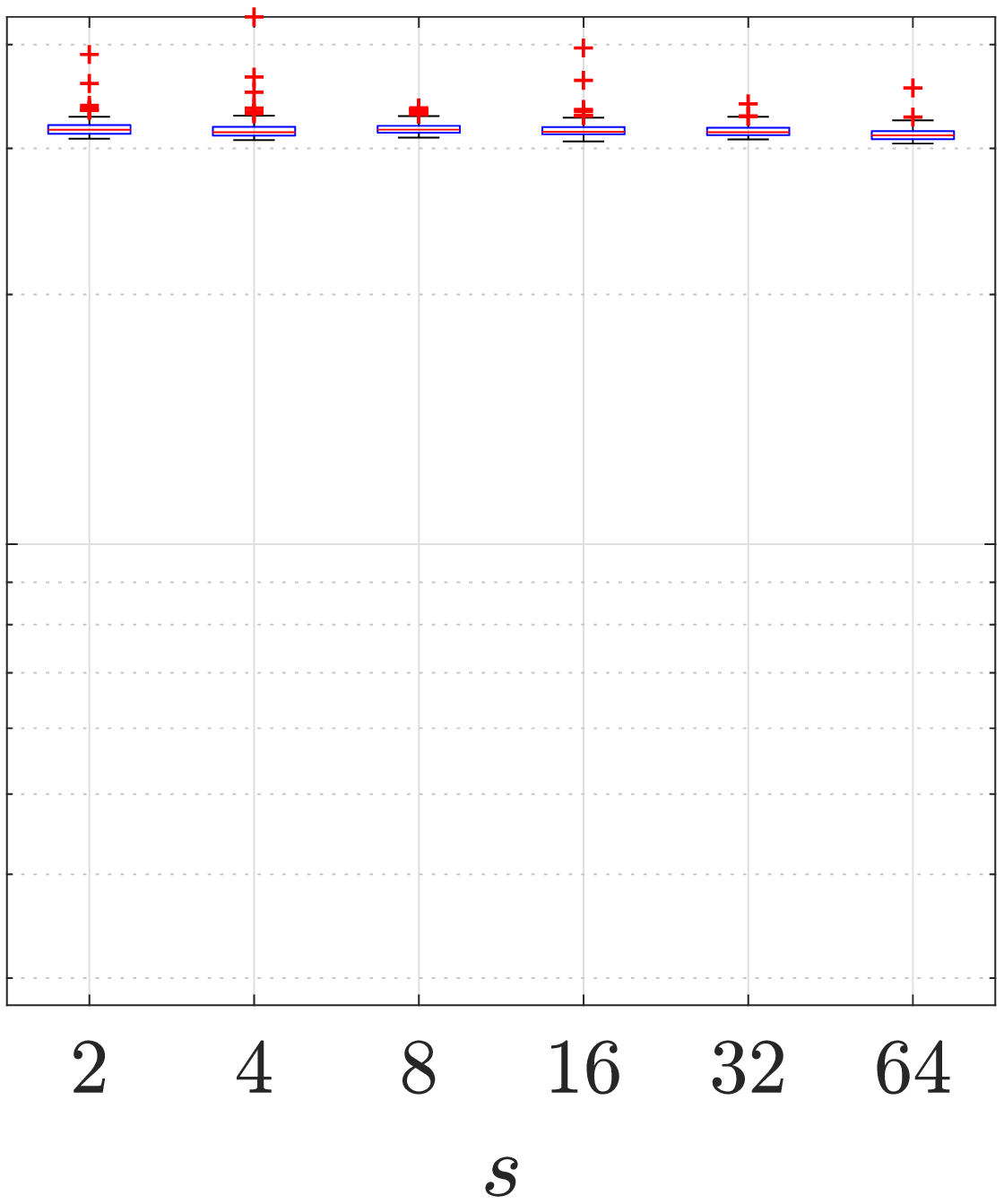} &
\includegraphics[height = 3.8cm]{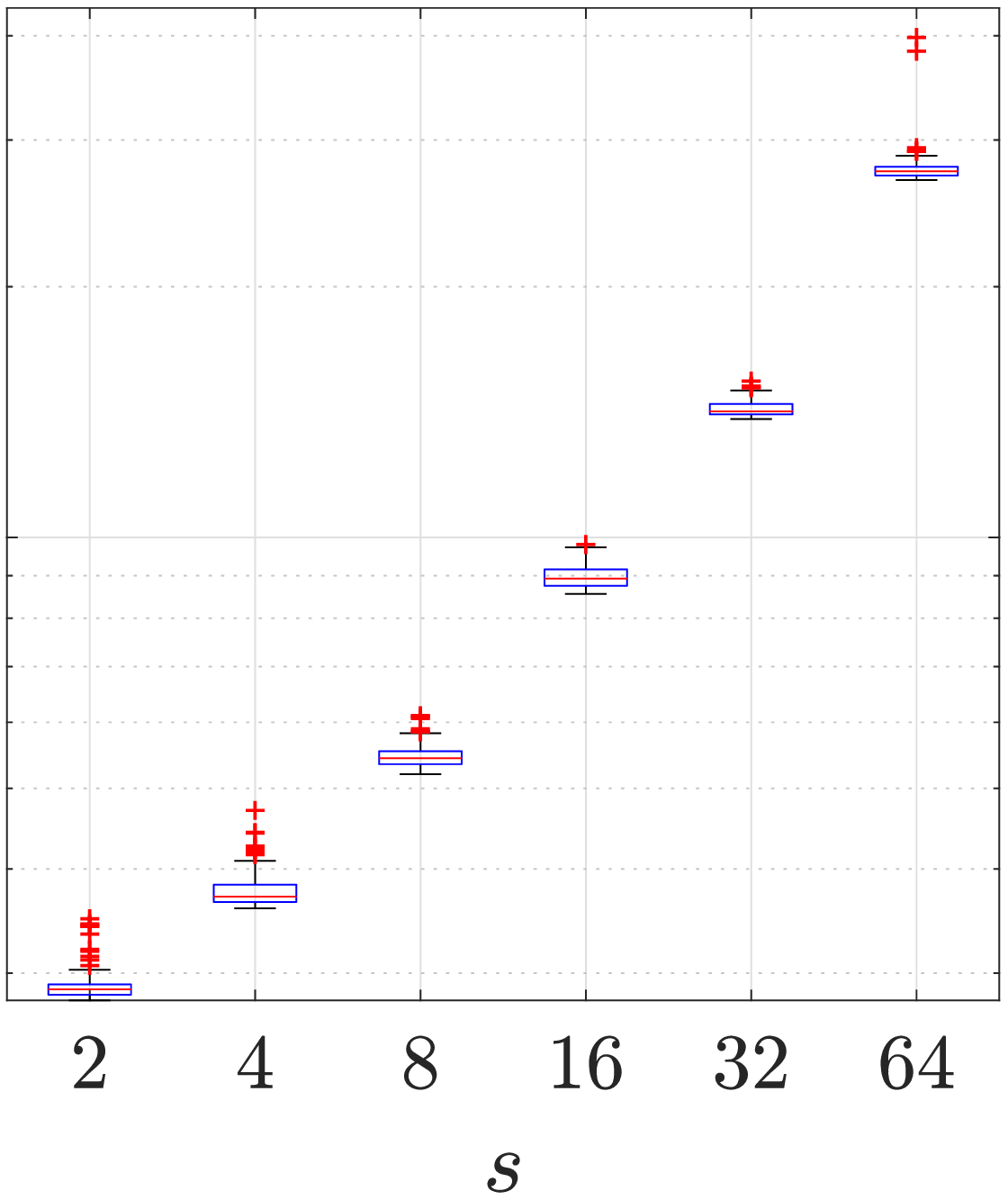} 
\end{tabular}
\caption{\label{figure2}Performance analysis of full and compressive spectral collocation from the accuracy and computational cost viewpoints for the recovery of randomly generated $s$-sparse solutions to the diffusion equation with nonconstant coefficient $\eta$ defined by \eqref{eq:nonconstant_coefficient}. The box plots are referred to 100 random runs.}
\end{figure}

The very good level of accuracy of both approaches is confirmed by this second experiment. It is remarkable that the recovery error of the compressive approach is slightly better than that of the full approach, especially for smaller sparsities. We  observe that, in general, the compressive approach outperforms the full one both in terms of accuracy \emph{and} computational cost. In general, the smaller the sparsity $s$, the higher the computational cost reduction gained by compressing the discretization. By looking at the second column, we can see that, in the full case, OMP is able to compute more accurate solutions with respect to the backslash. This is arguably due to the fact that the largest least-squares problem solved by OMP (during the $s$-th iteration) is associated with an $N \times s$ submatrix of the full $N \times N$ discretization matrix linear system. Therefore, the former matrix is, in general, better conditioned than the latter.%NEW
\footnote{The substantial independence of the OMP recovery cost with respect to $s$ for the full approach depends on two factors: the particular implementation of OMP in the package \textsf{OMP-Box} and the normalization step $\widetilde{A} = A M^{-1}$ in Algorithm~3.1. 
In fact, in order to speed up the OMP iteration, the function \texttt{omp} of \textsf{OMP-Box} used to produce these results takes $\widetilde{A}^T\widetilde{A}$ as input. When $A$ is $N \times N$, the cost of computing the matrices $\widetilde{A}$ and $\widetilde{A}^T\widetilde{A}$ is independent of $s$ and it turns out to be consistently larger than the cost of OMP itself. As a result, the effect of $s$ on the overall computational cost is negligible. The same remark holds for Fig.~4.}
% END NEW

\subsection{Recovery of compressible solutions}

We compare the full and the compressive approaches for the recovery of compressible solutions. We will test the methods for the recovery of the the exact solution
\begin{equation}
\label{eq:exact_solution}
u(z) = (16 \, z_1 \,  z_2 \, (1-z_1) (1-z_2))^2, \quad \forall z \in \overline{\Omega},
\end{equation}  
whose plot is shown in Fig.~\ref{figure3} (top left). The forcing term $F$ in \eqref{eq:diffusion_strong} is defined in order to have \eqref{eq:exact_solution} as exact solution.

Let us fix $n = 32$, corresponding to $N = 1024$, and $s = 32$. With this choice, and recalling \eqref{eq:choice_m_K}, we have $m  =444$.  Fig.~\ref{figure3} shows the results of the full and the compressive spectral collocation approaches. 
\begin{figure}[t]
\centering
\begin{tabular}{cc}
\includegraphics[height = 4.5cm]{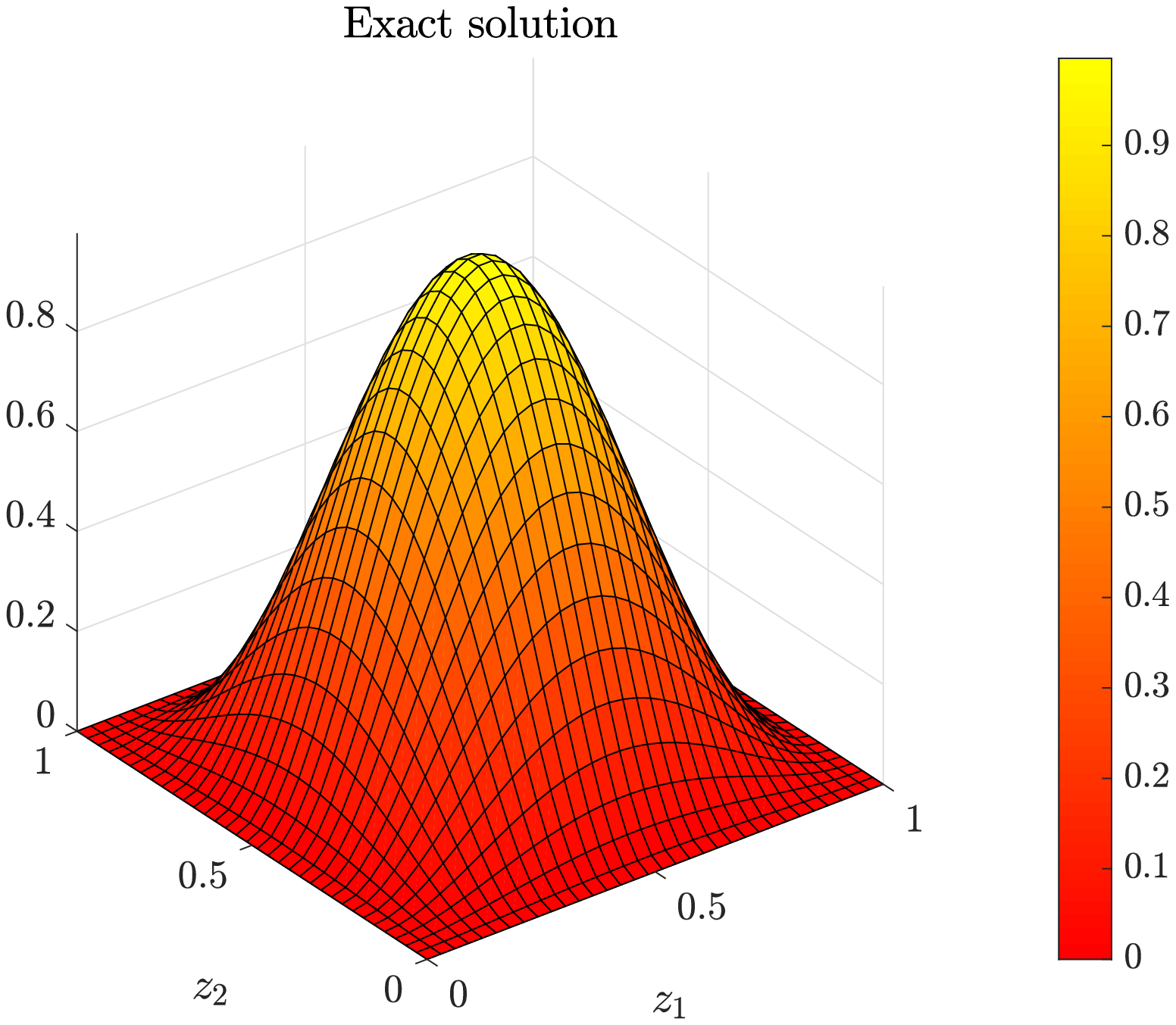} & 
\includegraphics[height = 4.5cm]{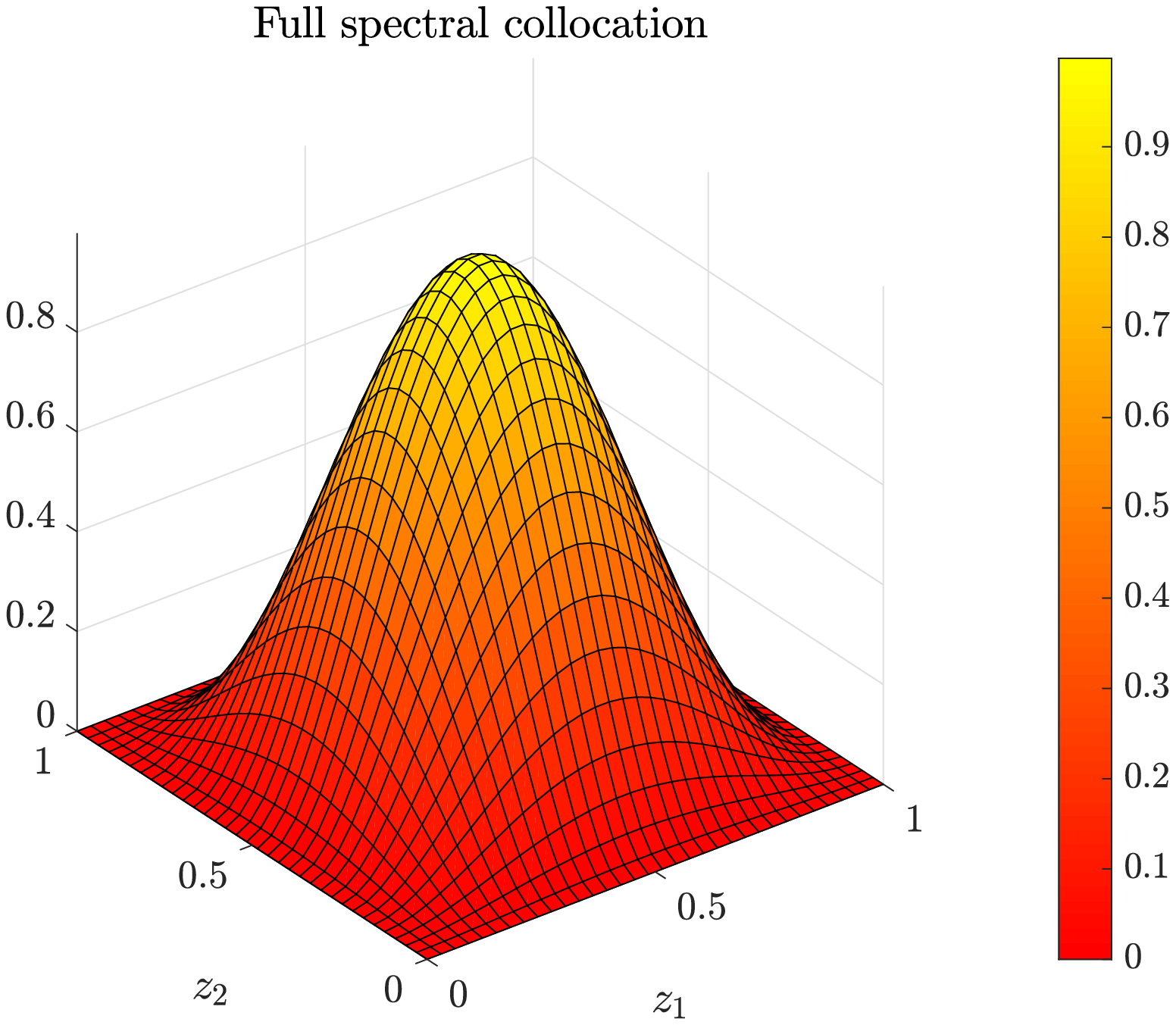} \\\includegraphics[height = 4.5cm]{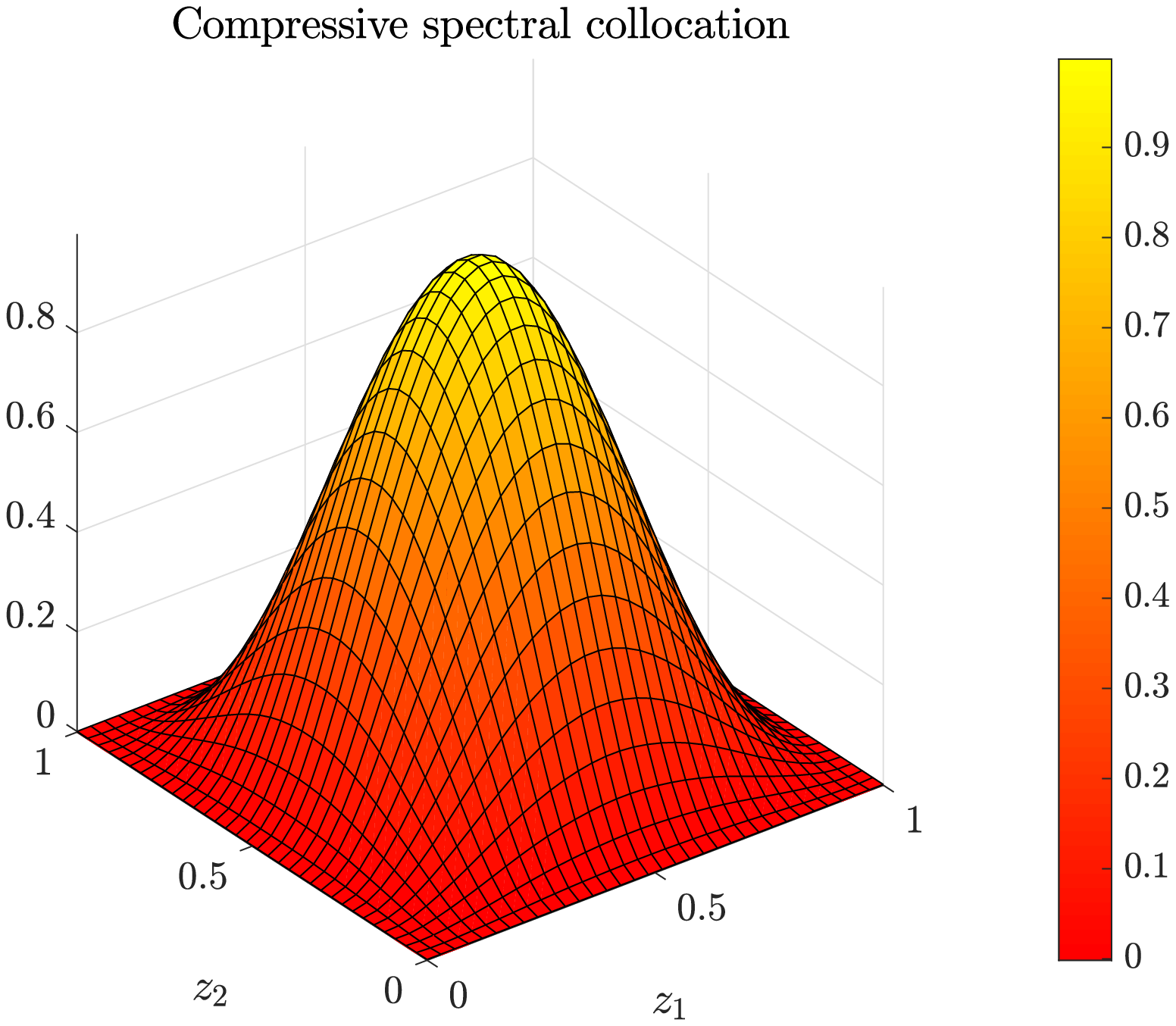} &\includegraphics[height = 4.5cm]{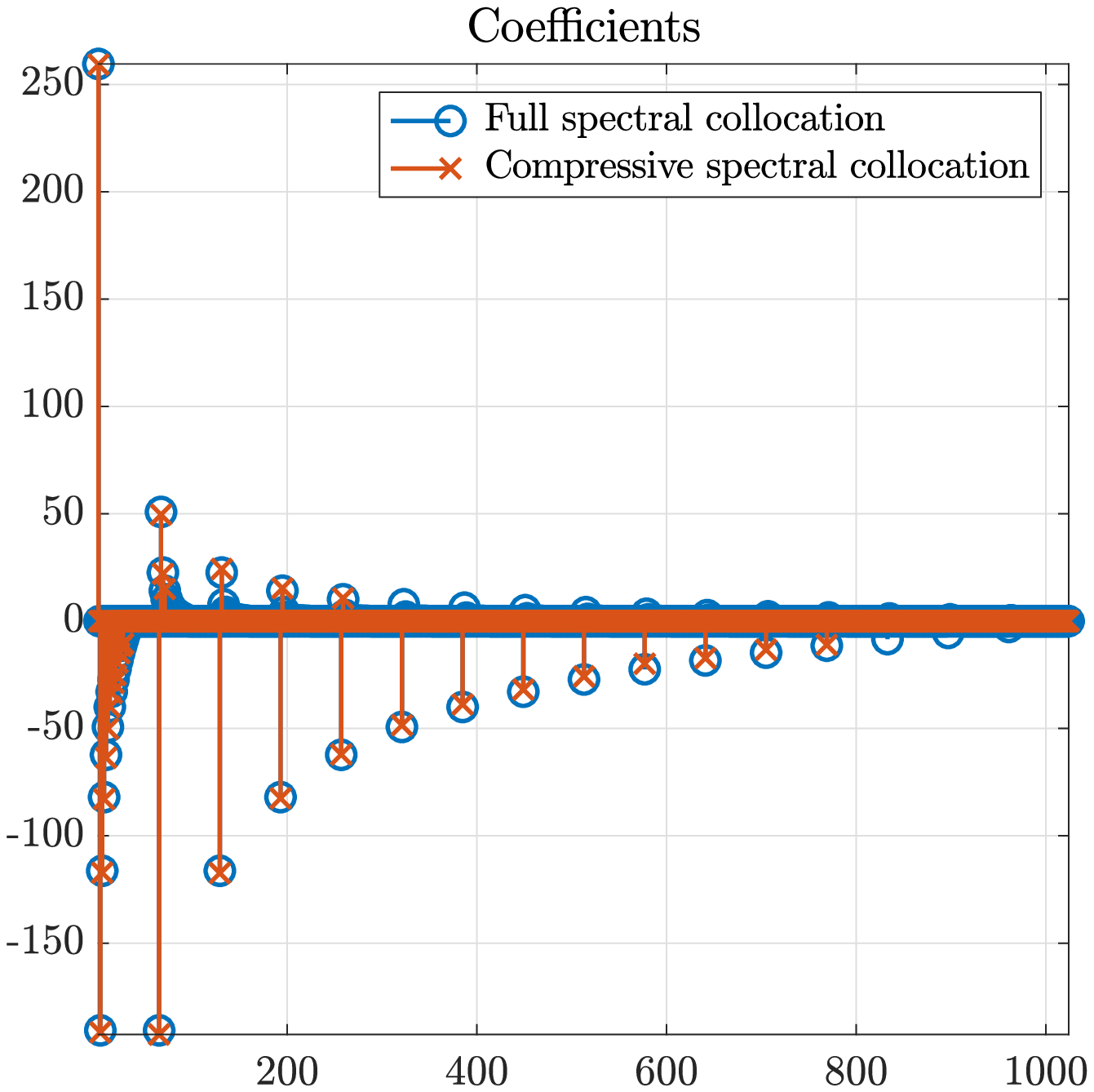}
\end{tabular}
\caption{\label{figure3}Full and compressive spectral approximation of the compressible solution \eqref{eq:exact_solution} to a diffusion equation with coefficient $\eta$ defined by \eqref{eq:nonconstant_coefficient}. Top left: exact solution defined as in \eqref{eq:exact_solution}. Top right: full spectral collocation approximation with $n = 32$. Bottom left: Compressive spectral collocation approximation with $n = 32$ and $s = 32$. Bottom right: plot of the coefficients $x^{\text{full}}$ and $\hat{x}$, corresponding to the full and compressive approximations.}
\end{figure}
Both methods produce a very good approximation to the exact solution. We can appreciate the ability of OMP to recover the largest absolute coefficients of the vector $x^{\text{full}}$ in Fig.~\ref{figure3} (bottom right). Comparing Fig.~\ref{figure3} (top right) and Fig.~\ref{figure3} (bottom left), we see that computing a 32-sparse approximation to the 1024-dimensional vector $x^{\text{full}}$ is sufficient to recover a compressive approximation that is visually indistinguishable from the full approximation, thanks to the compressibility of the solution.

In the same setting as before, we consider sparsity levels $s = 2,4,8,16,32,64$ and carry out a more extensive numerical assessment, in the same spirit as Fig.~\ref{figure2}. We repeat the previous experiment 100 times and show the corresponding box plots  in Fig.~\ref{figure4}.
\begin{figure}[t]
\centering
\begin{tabular}{ccc}
\hspace{0.8cm}Full (backslash recovery) & Full (OMP recovery) & CS  \\
\includegraphics[height = 3.6cm]{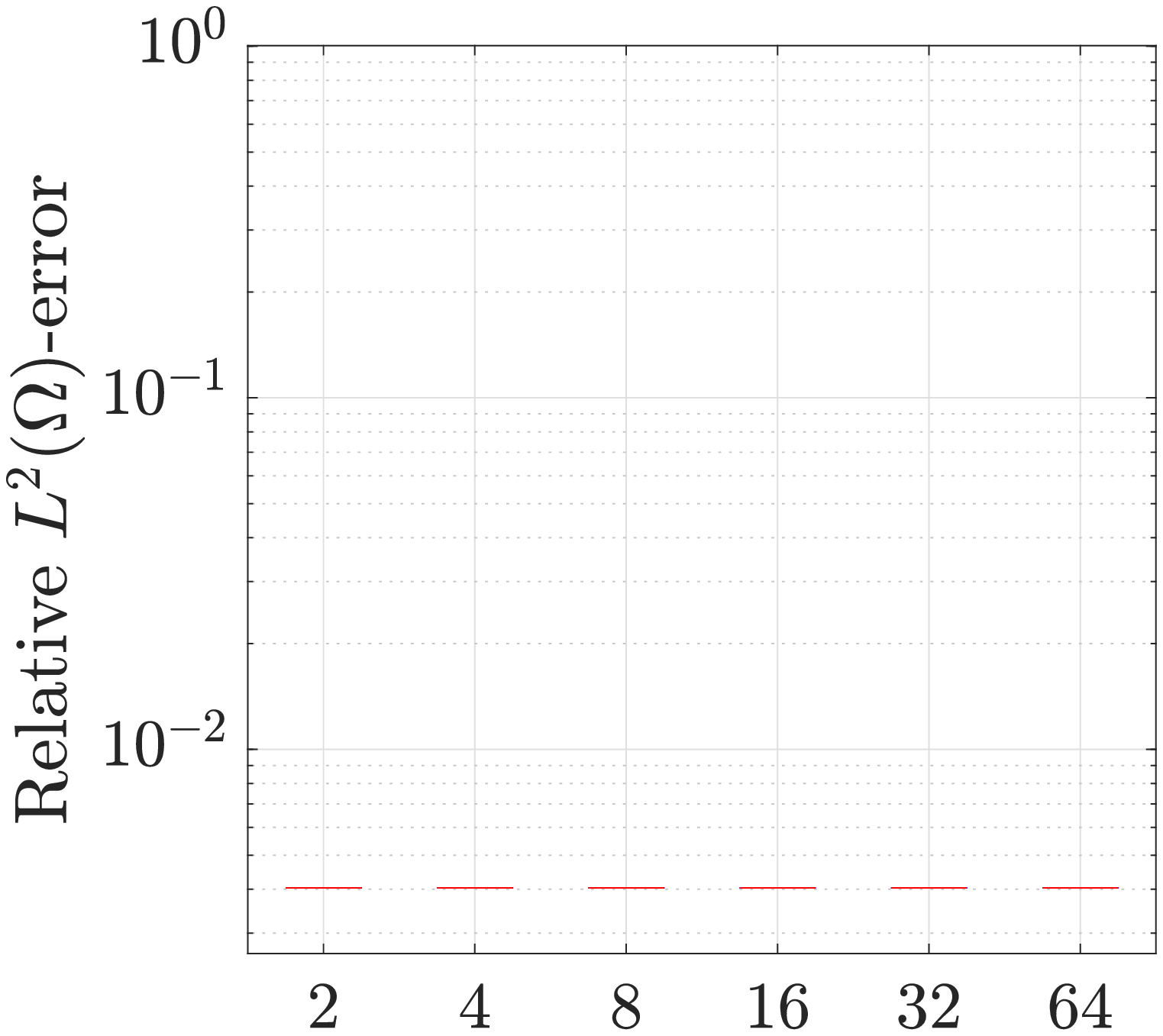} &
\includegraphics[height = 3.5cm]{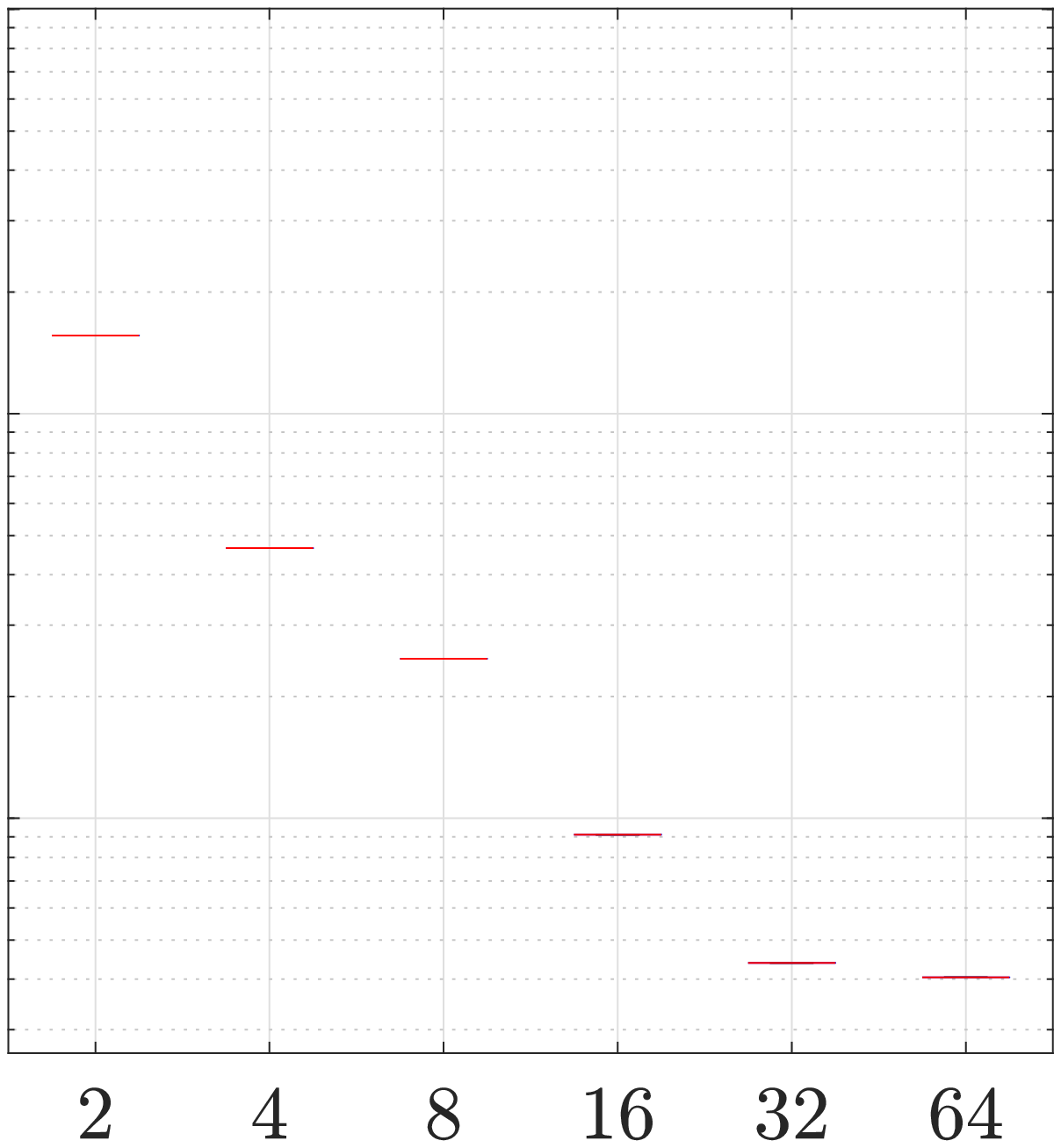} &
\includegraphics[height = 3.5cm]{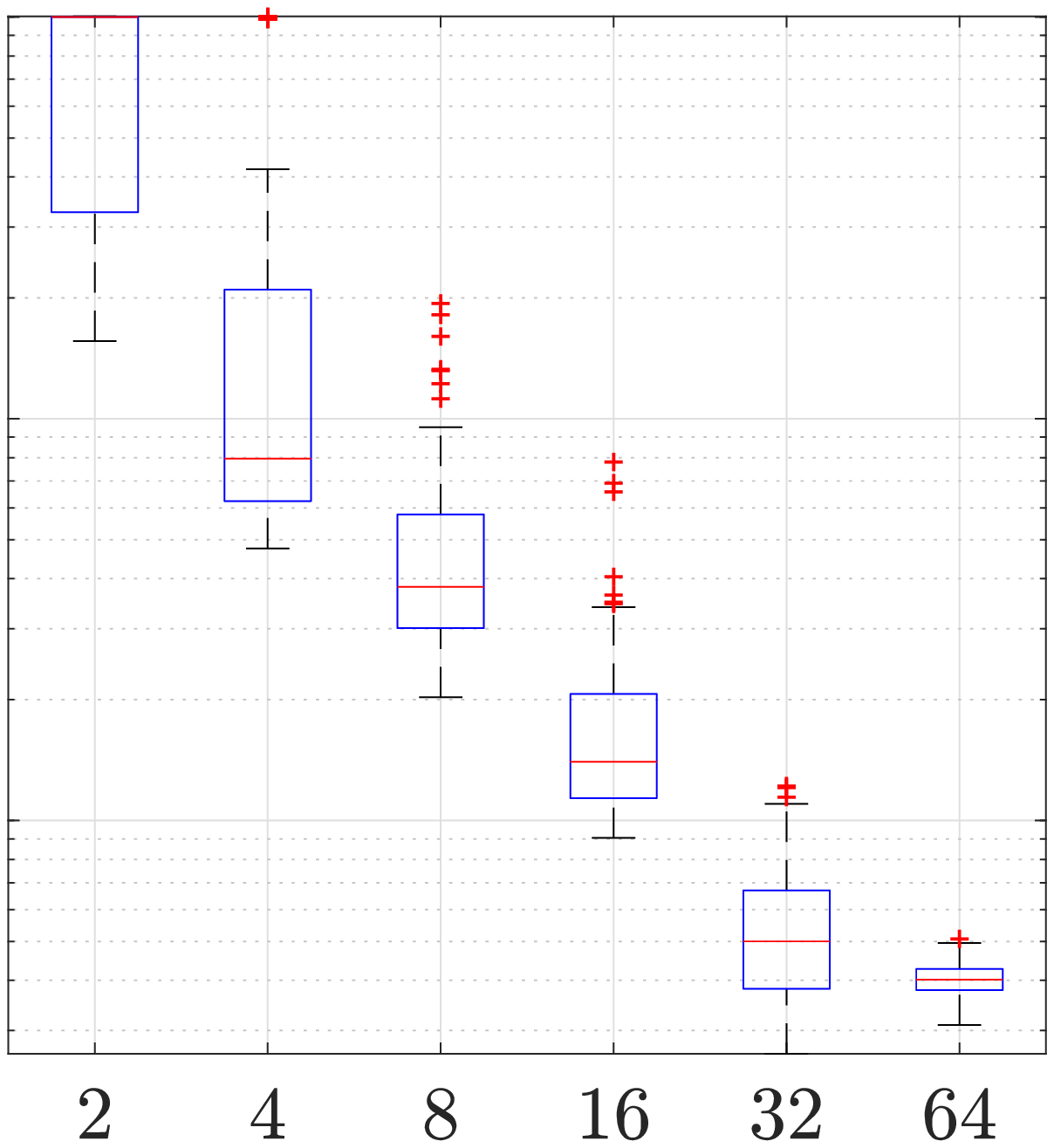} \\
\includegraphics[height = 3.5cm]{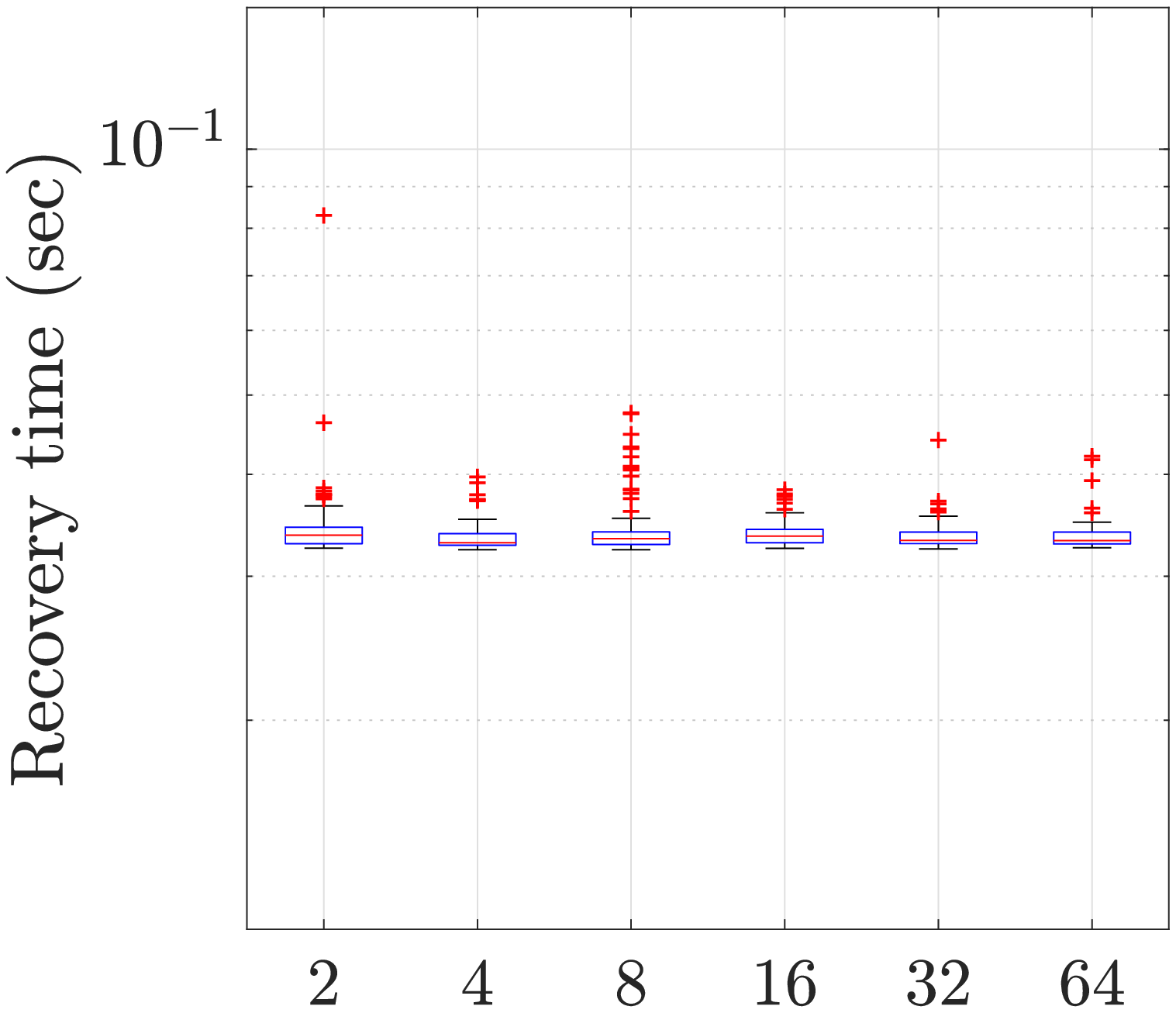} &
\includegraphics[height = 3.5cm]{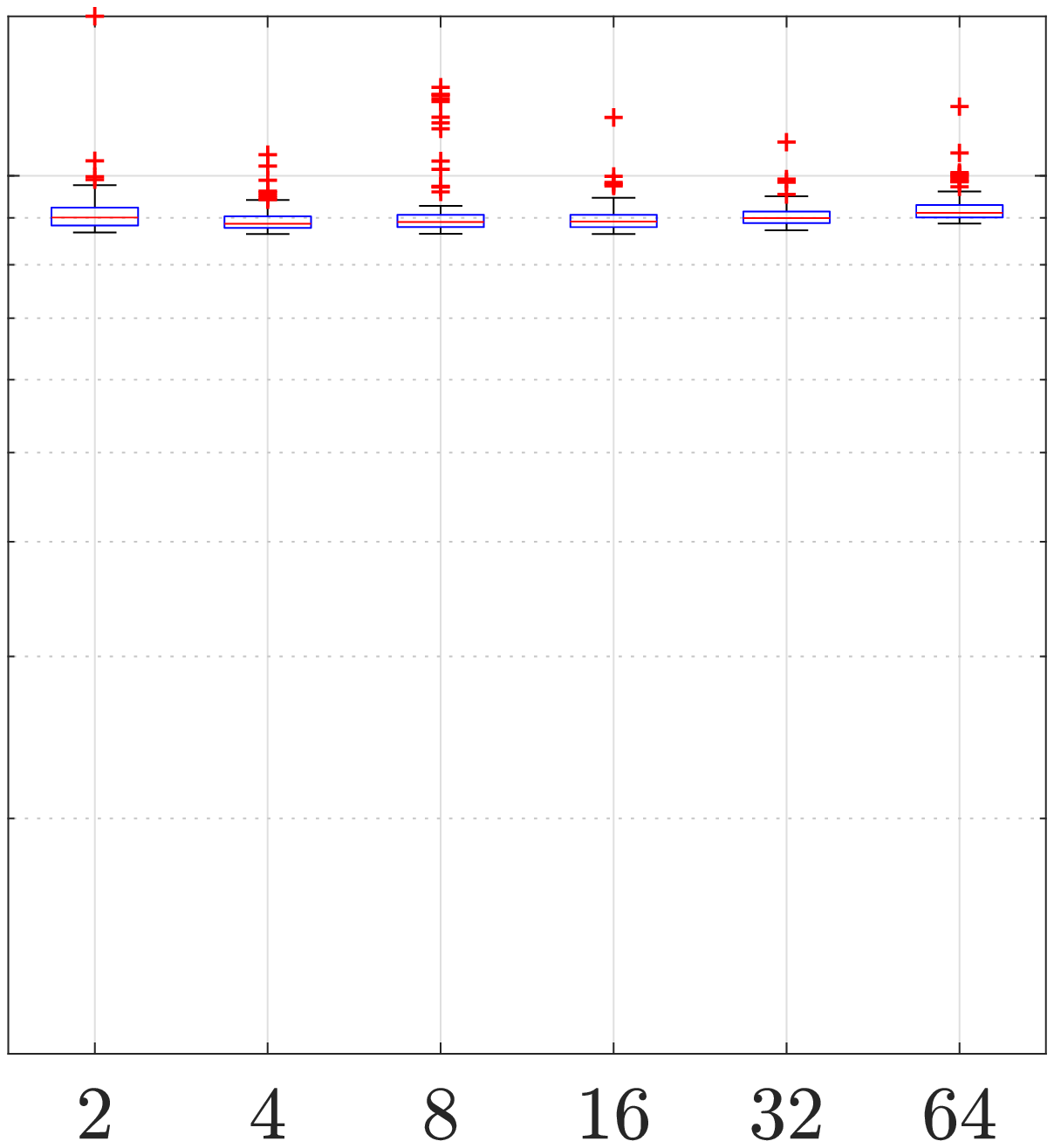} &
\includegraphics[height = 3.5cm]{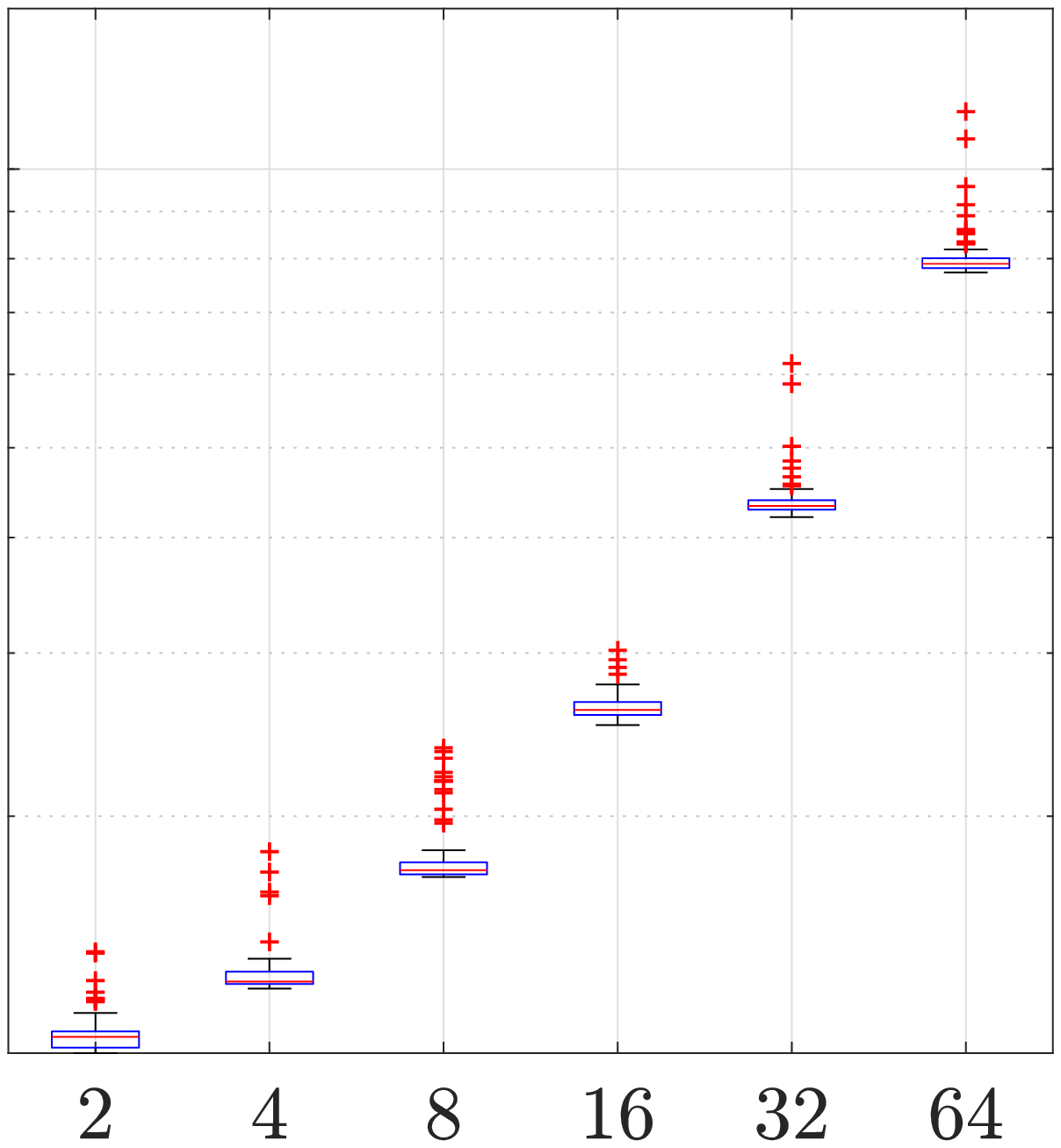} \\
\includegraphics[height = 3.8cm]{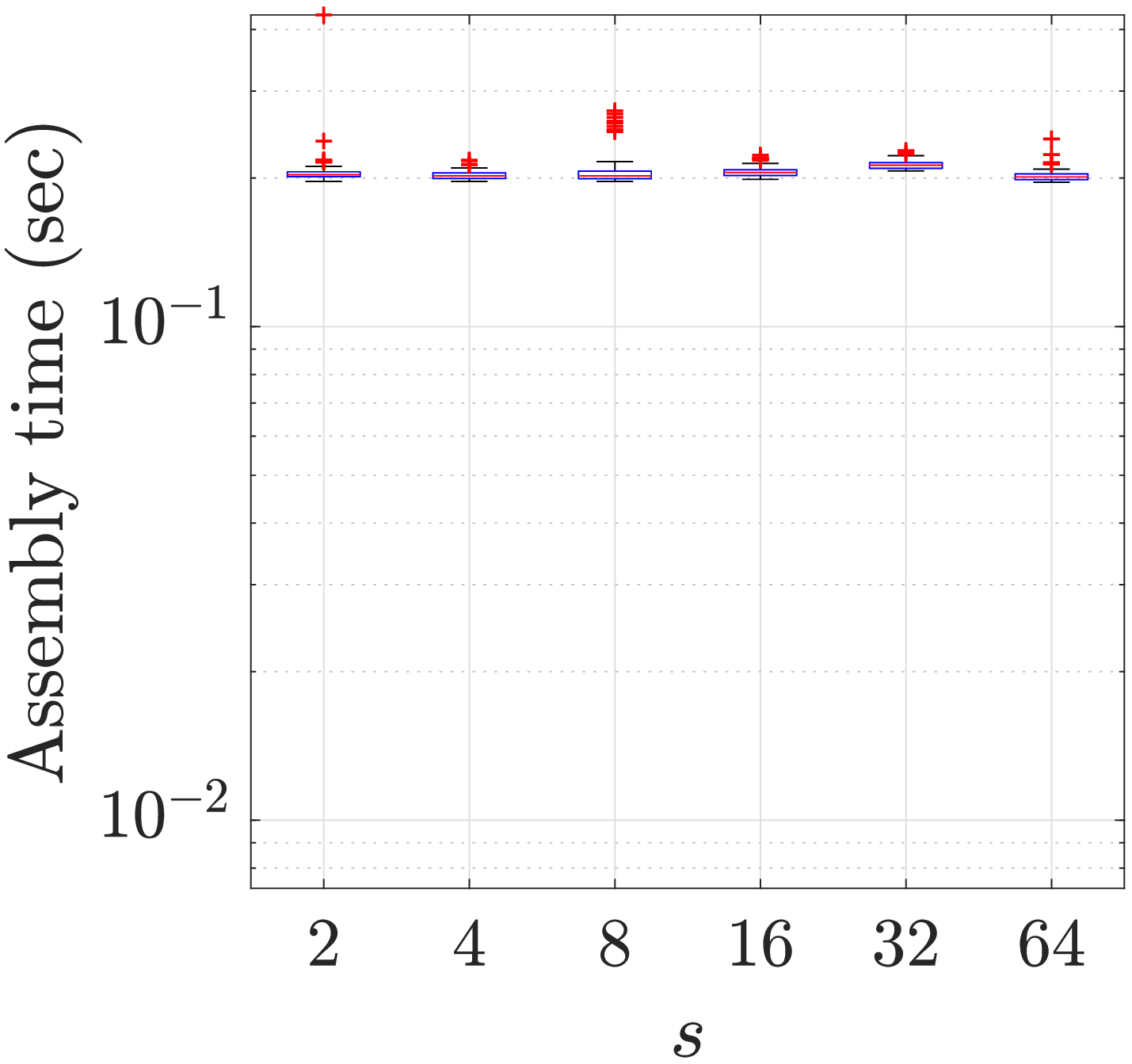} & \includegraphics[height = 3.8cm]{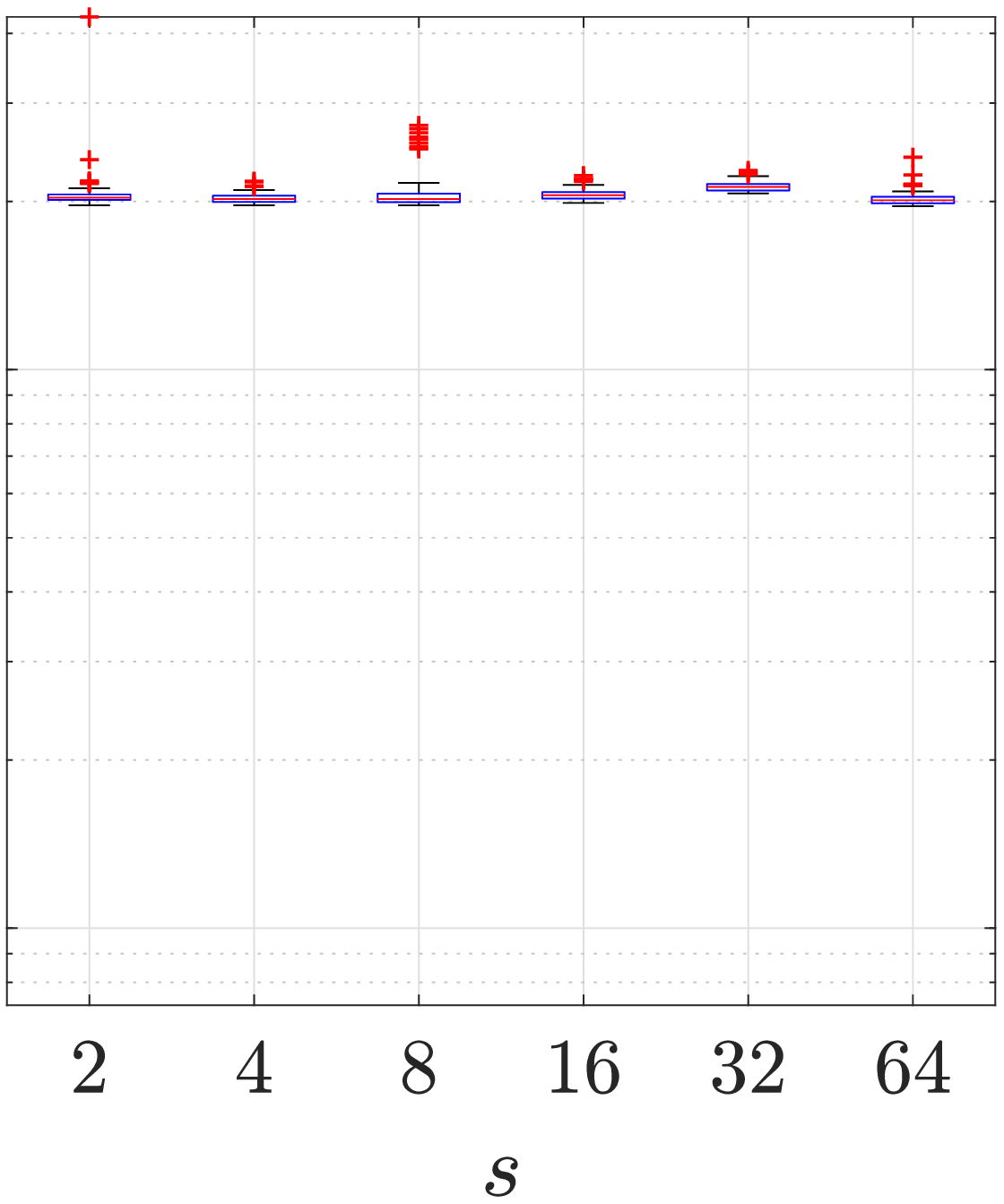} &
\includegraphics[height = 3.8cm]{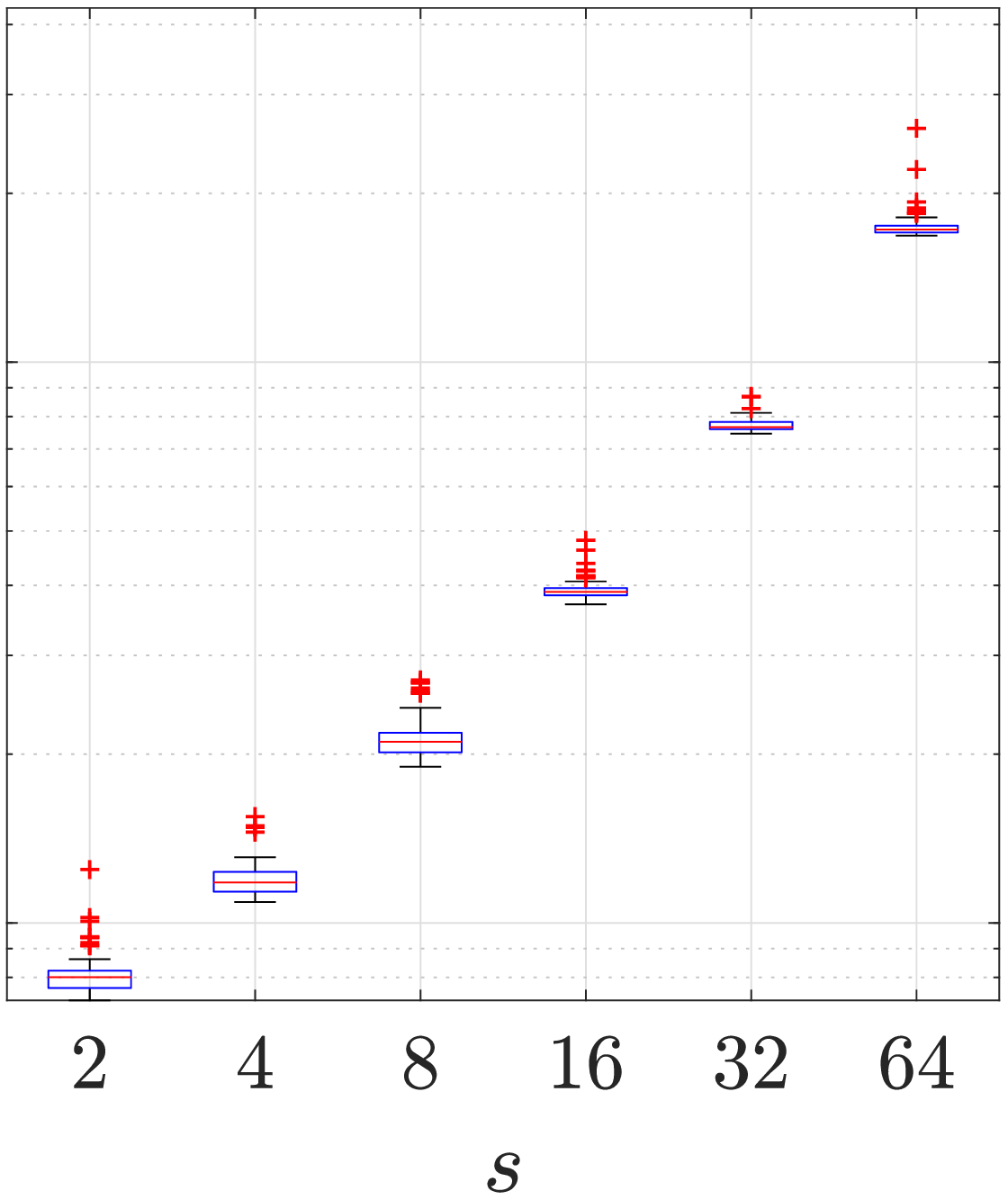} 
\end{tabular}
\caption{\label{figure4}Performance analysis of full and compressive spectral collocation from the accuracy and computational cost viewpoints for the recovery of the compressible solution \eqref{eq:exact_solution} to the diffusion equation with nonconstant coefficient $\eta$ defined by \eqref{eq:nonconstant_coefficient}. The box plots are referred to 100 random runs.}
\end{figure}
The recovery and assembly times are analogous to those of Fig.~\ref{figure2}. In terms of accuracy, we are of course not able to obtain exact recovery, as in the sparse case. The relative $L^2(\Omega)$-error associated with the full spectral approximation is $4.0 \cdot 10^{-3}$. When performing $s$ iterations of OMP on the full system (Fig.~\ref{figure4} top center), the error decays up to $s=32$, when the accuracy saturates to the level of the full approximation. The situation is analogous for the compressive approach, and the decay of the recovery error shares the same trend as the full approach with OMP recovery, up to a distortion due to randomization and to subsampling. Of course, the assembly cost is always lower for the compressive approach. The recovery cost is lower for $s \leq 16$. The values $s = 8,16,32$ seem to be realize a good trade-off between accuracy and computational efficiency.

\section{Conclusions}
\label{sec:conclusions}

We have proposed a compressive spectral collocation approach for the numerical solution of PDEs, focusing on the case of the homogeneous diffusion equation (Algorithm~\ref{alg:CSC}). 

From the theoretical viewpoint, we have shown that the proposed approach satisfies the restricted isometry property of compressive sensing under suitable assumptions on the diffusion coefficient (Theorem~\ref{thm:RIP_CSC}). This implies sparse recovery properties for the method, discussed in Section~\ref{sec:recovery_discussion}. 

From the numerical viewpoint, we have implemented the method in \textsc{Matlab$^\text{\textregistered}$} and compared it with the corresponding full spectral collocation approach in the two-dimensional case (Section~\ref{sec:numerics}). In the case of exact sparsity, the compressive method outperforms the corresponding full spectral method both in terms of accuracy and sparsity. For compressible solutions, we have studied the trade-off between accuracy and computational efficiency, showing that the compressive approach can reduce the computational cost while preserving good accuracy.

This first study shows the promising nature of the compressive spectral collocation method. However, many issues still remain open for future investigation. First, a rigorous study of the recovery guarantees of the method. Moreover, when $d \gg 1$, the approach suffers from the curse of dimensionality. This effect may be lessened by resorting to weighted $\ell^1$-minimization and by considering smaller multi-index spaces, using techniques analogous to \cite{adcock2017compressed,chkifa2017polynomial}. The method can be generalized in a straightforward way to advection-diffusion-reaction equations, but its analysis in this case deserves a careful investigation. Finally, the application of the method to nonlinear problems is also a next promising research direction.

\section{Acknowledgements}
The author acknowledges the support of the Natural Sciences and Engineering Research Council of Canada through grant number 611675 and the Pacific Institute for the Mathematical Sciences (PIMS) through the program ``PIMS Postdoctoral Training Centre in Stochastics''. Moreover, the author gratefully acknowledge Ben Adcock and the anonymous reviewer for providing helpful comments on the first version of this manuscript.

\bibliographystyle{plain}
\bibliography{biblio}

\end{document}